\newtheorem{theorem}{Theorem}[section]
\newtheorem{lemma}[theorem]{Lemma}
\newtheorem{proposition}[theorem]{Proposition}
\newtheorem{corollary}[theorem]{Corollary}
\newtheorem*{corollary*}{Corollary}
\theoremstyle{definition}
\newtheorem{definition}[theorem]{Definition}
\theoremstyle{remark}
\newtheorem{example}[theorem]{Example}
\newtheorem{remark}[theorem]{Remark}
\newcommand{\cat}[1]{{\mathbf{#1}}}
\renewcommand{\rm}[1]{{\mathrm{#1}}}
\newcommand{\op}{\rm{op}}
\newcommand{\Hom}{\rm{Hom}}
\newcommand{\Alg}{\cat{Alg}}
\newcommand{\id}{\rm{id}}
\newcommand{\ZZ}{\mathbb{Z}}
\newcommand{\QQ}{\mathbb{Q}}
\newcommand{\RR}{\mathbb{R}}
\newcommand{\FF}{\mathbb{F}}
\newcommand{\AQ}{\rm{AQ}}
\newcommand{\rlie}{\rm{rLie}}
\newcommand{\alg}{\cat{Alg}}
\renewcommand{\L}{\rm{L}}
\newcommand{\sym}{\rm{Sym}}
\newcommand{\W}{\rm{W}}
\title{$E_2$-formality via obstruction theory}
\author{Geoffroy Horel}
\address{Université Sorbonne Paris Nord, Laboratoire Analyse, Géométrie et Applications, CNRS (UMR 7539), 93430, Villetaneuse, France.}
\email{horel@math.univ-paris13.fr}
\date{\today}
\begin{document}

\maketitle
	
\begin{abstract}
We attack the question of $E_2$-formality of differential graded algebras over $\FF_p$ via obstruction theory. We are able to prove that $E_2$-algebras whose cohomology ring is a polynomial algebra on even degree classes are formal. As a consequence we prove $E_2$-formality of the classifying space of some compact Lie groups or of Davis-Januszkiewicz spaces.
\end{abstract}

Formality of spaces is an old idea originating in the field of rational homotopy theory. In that context, a space is said to be formal if its rational cohomology is quasi-isomorphic to its cochains as a commutative or $E_\infty$-algebra. When this is the case, the whole rational homotopy type is controlled by a very manageable algebraic gadget.

With integral or torsion coefficients, the question of formality admits several versions. The most naive generalization (i.e. asking for cochains to be quasi-isomorphic to cohomology as $E_\infty$-algebras) does not have interesting examples. Indeed, it is an observation of Mandell that cochains on a space can never be quasi-isomorphic to a strictly commutative algebra unless the space is homotopically discrete (see \cite[Proposition 5.1]{flynnobstruction} for a formal proof of this observation). Therefore, one is led to study weaker forms of formality. There is some literature devoted to proving $E_1$-formality of certain spaces, i.e. proving that $C^*(X,k)$ is quasi-isomorphic to $H^*(X,k)$ as differential graded algebras (see for example \cite{elhaouari,berglundkoszul,salvatoreformality,drummondhorel,ciricietale}). When this is the case, some invariants of $X$ can be computed from the cohomology ring of $X$. For example, the bar construction spectral sequence
\[\rm{Tor}^{H^*(X;\FF_p)}(\FF_p,\FF_p)\Rightarrow H^*(\Omega X,\FF_p)\]
collapses at the $E_2$-page. However, this collapse result is additive and there are usually some multiplicative extension, that cannot be resolved by $E_1$-formality.

There is in fact a countable family of formality properties interpolating between $E_1$-formality and $E_\infty$-formality. Namely, one can study $E_n$-formality for any $1\leq n\leq \infty$. We say that a space $X$ is $E_n$-formal if its cohomology ring viewed as an $E_n$-algebra using the map of operads $E_n\to\mathcal{C}om$ is quasi-isomorphic to its singular cochains with its underlying $E_n$-algebra structure. This notion was introduced by Mandell in \cite{mandellformality}. He made several conjectures about it. In particular, he conjectured that $n$-fold suspensions are $E_n$-formal. This conjecture was proved very recently in \cite{heutsland}.

In the present paper, we study $E_2$-formality. Our approach is obstruction theoretic. Obstruction theory has been classically used to prove or study formality (see for example \cite{halperinstasheff,berglundkoszul,salehformality,emprinkaledin}). Typically, there is a sequence of obstruction classes in Hochschild or André-Quillen cohomology groups that have to vanish for the algebra to be formal. In general, computing the actual obstructions can be very difficult unless the group in which they live is zero (see \cite[Proposition 6.9]{salvatoreformality} for an example of a non-trivial obstruction class actually being computed).

In our case, we exploit the fact that the operations on the homology of an $E_2$-algebras are very explicit, thanks to the work of Cohen (see \cite{cohenladamay}), and quite manageable. The obstruction group is a Quillen cohomology group in the category of $\W_1$-algebras, where the monad $\W_1$ is the monad of homology operations on $E_2$-algebras. This makes the obstruction groups computable in certain easy situations. Our main result is Theorem \ref{theorem : main} which states that $E_2$-algebras whose cohomology ring is polynomial on even degree variables are $E_2$ formal.

We are in fact able to push this result a bit further to prove formality of certain diagrams of $E_2$-algebras. As a corollary, we prove $E_2$-formality of the classifying space of compact Lie groups at primes that do not divide the order of the Weyl group (Theorem \ref{theorem : formality compact Lie group}) generalizing a recent result of Benson and Greenlees proving $E_1$-formality of such spaces (see \cite{bensonformality}). We recover $E_2$-formality of Davis-Januszkiewicz spaces (originally proven by Franz in \cite{franzhomotopy}) and we prove a multiplicative collapse result for certain Eilenberg-Moore spectral sequences (Theorem \ref{theorem : formality and EMSS}). 

\subsection*{Acknowledgements}
 I am grateful to Jeffrey Carlson, Alexander Berglund, and Anibal Medina-Mardones for useful conversations about this paper. I also thank the anonymous referee for their careful reading of my manuscript and for making many useful suggestions that improved the clarity of this paper.

\subsection*{Conventions}

We denote by $\cat{grVect}_k$ the category of graded vector spaces over a field $k$. This category has a symmetric monoidal structure whose symmetry isomorphism involves the usual sign. Our graded vector spaces are cohomologically graded. We write $V\mapsto sV$ for the shift in this category given by $(sV)^i=V^{i-1}$.

Given a graded vector space $V$, we denote by $\sym(V)$ the symmetric algebra on $V$, i.e. the free graded commutative algebra on $V$. If $V$ is concentrated in odd degrees, we write $\Lambda(V)$ for the exterior algebra on $V$ which is, by definition, the free \emph{strictly} commutative algebra on $V$ (strictly commutative means that elements of odd degrees square to zero). Of course, by the sign rules, there is an isomorphism $\Lambda(V)\cong \sym(V)$ if the characteristic of the field is different from $2$.

\section{Quillen cohomology}

\subsection{Non-abelian derived functor}

We follow the treatment of \cite{franklandbehavior}. Let $\cat{C}$ be a complete and cocomplete category with a set of projective compact generators. Following Frankland, such a category shall be called quasi-algebraic. Thanks to a theorem of Quillen (see \cite[Theorem 2.4]{quillenhomotopical}), if $\cat{C}$ is a quasi-algebraic category, its category of simplicial objects, denoted $s\cat{C}$ has a model structure transferred along the right adjoint functor
\begin{align*}
s\cat{C}&\to s\cat{Set}^{\mathcal{G}}\\
X_\bullet&\mapsto \{\Hom_{\cat{C}}(G,X_\bullet)\}_{G\in\mathcal{G}}
\end{align*}
where $\mathcal{G}$ is a set of compact projective generators of $\cat{C}$.

Given a left adjoint functor between two quasi-algebraic categories, we define its left derived functor
\[\L F\colon s\cat{C}\to s\cat{D}\]
to be the left derived functor in the sense of model categories of the functor $F\colon s\cat{C}\to s\cat{D}$ given by degreewise application of the functor $F$.

\begin{remark}\label{remark : non-abelian derived}
There is a conceptural description of the $\infty$-category underlying the model category $s\cat{C}$ as the \emph{non-abelian derived category of} $\cat{C}$. Explicitly, this is completion of the category $\cat{C}^{cp}$ of compact projective objects of $\cat{C}$ under $\infty$-sifted colimits. Using this terminology, the functor $\L F$ is the unique extension of $F_{|\cat{C}^{cp}}$ into a functor that preserves sifted colimits. This is originally due to Lurie (see \cite[Section 5.5.8]{luriehigher} or  \cite[Section 5.1.1]{kestupurity} for a concise version of Lurie's work).
\end{remark}

\subsection{Quillen cohomology}

Let $\cat{C}$ be a quasi-algebraic category. For $c$ an object of $\cat{C}$, we may consider the category $\cat{Ab}(\cat{C}/c)$ of abelian group objects over $c$. By the adjoint functor theorem, there is an abelianization functor
\[\rm{Ab}\colon \cat{C}/c\to\cat{Ab}(\cat{C}/c)\]
which is left adjoint to the forgetful functor. We define the cotangent complex of $c$ to be the left derived functor of the abelianization functor. We denote this object by $L_c^{\cat{C}}$ or simply $L_c$ if there is no ambiguity. It follows from \cite[Propositions 3.33 and 3.34]{franklandbehavior} that the categories $\cat{C}/c$ and $\rm{Ab}(\cat{C}/c)$ are quasi-algebraic and hence that the relevant model structures exist.

Given an object $m\in\cat{Ab}(\cat{C}/c)$, we define the Quillen cohomology of $c$ with coefficients in $m$ as~:
\[\AQ^s_{\cat{C}}(c,m):=\pi^s\Hom_{\cat{Ab}(\cat{C}/c)}(L_c,m).\]
(Observe that $\Hom_{\cat{Ab}(\cat{C}/c)}(L_c,m)$ is a cosimplicial abelian group, and we denote by $\pi^s$ the $s$-th cohomology group of its associated cochain complex.)

\begin{remark}
In all the cases of interest to us, the category $\cat{C}$ will come with a forgetful functor to the category of graded vector spaces over $k$. Moreover, it will be the case that the shift functor on the category of graded vector space will pass to the category of abelian group objects in $\cat{C}/c$. In this case, the André-Quillen cohomology groups are bigraded
\[\AQ^{s,t}_{\cat{C}}(c,m)=\AQ^s_{\cat{C}}(c,s^tm).\]
\end{remark}

\begin{proposition}\label{prop : base change}
Let $F\colon\cat{C}\leftrightarrows \cat{D}\colon U$ be an adjunction between quasi-algebraic categories. Let $c$ be an object of $\cat{C}$. Assume that $\L F(c)\to F(c)$ is a weak equivalence. Let $m\in\cat{Ab}(\cat{D}/F(c))$. Then there is an isomorphism
\[\AQ^*_{\cat{D}}(Fc,m)\cong\AQ^*_{\cat{C}}(c,Um).\]
\end{proposition}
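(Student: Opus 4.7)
The plan is to exhibit a natural equivalence $\L\tilde F(L_c^{\cat{C}}) \simeq L_{Fc}^{\cat{D}}$ for a suitable derived left adjoint $\tilde F$ lifting $F$ to abelian group objects, and then to conclude by derived adjunction.

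First, I would produce the necessary adjunctions. The right adjoint $U$ restricts to a functor $\cat{D}/Fc \to \cat{C}/c$ by pulling back along the unit $c\to UFc$; since this restriction still preserves finite limits, it further restricts to a functor $U:\cat{Ab}(\cat{D}/Fc) \to \cat{Ab}(\cat{C}/c)$, which admits a left adjoint $\tilde F$ by the adjoint functor theorem (both categories being quasi-algebraic). A direct check shows there is a canonical isomorphism of left adjoints
\[\tilde F \circ \rm{Ab}_{\cat{C}} \;\cong\; \rm{Ab}_{\cat{D}}\circ F : \cat{C}/c \to \cat{Ab}(\cat{D}/Fc),\]
obtained by observing that both functors admit the same right adjoint, namely $B\mapsto U(\text{forget}_{\cat{D}} B) = \text{forget}_{\cat{C}}(UB)$.

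Next, passing to simplicial objects, I would verify that these adjunctions descend to Quillen adjunctions in the transferred model structures: this boils down to noting that weak equivalences and fibrations in the slice and abelianized slice categories are detected on underlying simplicial objects in $s\cat{C}$ or $s\cat{D}$, where $U$ visibly preserves them. Now choose a cofibrant replacement $Q\to c$ in $s\cat{C}$; the hypothesis gives a weak equivalence $F(Q)\to F(c)$ in $s\cat{D}$, so $L_{Fc}^{\cat{D}}$ is computed by $\rm{Ab}_{\cat{D}}(F(Q))$. The commutation of left adjoints identifies this with $\tilde F(\rm{Ab}_{\cat{C}}(Q))$, which represents $\L\tilde F(L_c^{\cat{C}})$. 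Applying $\pi^s \Hom(-,m)$ and invoking the derived adjunction $\L\tilde F\dashv U$ then yields the desired identification of Quillen cohomologies.

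The main obstacle is the careful bookkeeping among the various transferred model structures: one needs to verify that a cofibrant replacement of $c$ in $s\cat{C}$ promotes to a cofibrant replacement of $\id_c$ in $s(\cat{C}/c)$ whose abelianization computes $L_c^{\cat{C}}$, and likewise on the $\cat{D}$-side, so that the hypothesis on $\L F(c)\simeq F(c)$ actually upgrades to the derived-functor identity needed above. These compatibilities should follow from the Frankland framework, but they are not automatic. A cleaner alternative would be to use the $\infty$-categorical description of Remark \ref{remark : non-abelian derived}: both composite derived functors at hand are the unique sifted-colimit-preserving extensions of functors that already agree on compact projectives (by the 1-categorical commutation above), and hence they agree on all of $s\cat{C}$.
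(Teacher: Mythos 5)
Your argument is correct and is essentially the paper's own: the paper simply cites Frankland's Proposition 4.10(4), remarking that the only place Frankland's stronger hypothesis (that $F$ preserve all weak equivalences) is used is to guarantee that $F$ applied to a cofibrant resolution of $c$ still resolves $F(c)$ --- which is exactly the point where you invoke $\L F(c)\simeq F(c)$. The identification of cotangent complexes via the commuting square of left adjoints followed by the derived adjunction is precisely the content of the cited result, so your write-up amounts to spelling out the proof the paper outsources.
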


\begin{proof}
This is almost \cite[Proposition 4.10 (4)]{franklandbehavior} except that we do not ask that $F$ preserves all weak equivalences contrary to Frankland. However, it is straightforward to check that all that is needed in the proof is that $\L F(c)\to F(c)$ is a weak equivalence.
\end{proof}

\section{Cohomology of $E_2$-algebras}

In this section, we restrict to working over a prime field $\FF_p$ with $p$ a prime number. We recall the work of Cohen (this was originally published in \cite[Chapter III]{cohenladamay} but some signs were fixed in \cite[Section 16]{galatiuscellular}) describing the cohomology operations on $E_2$-algebras.

\begin{definition}
A shifted restricted Lie algebra is a graded vector space $V^*$ with a Lie bracket
\[[-,-]\colon V^i\otimes V^j\to V^{i+j-1}\]
and a restriction
\[\xi\colon V^i\to V^{pi-p+1}\]
satisfying the following relations
\begin{enumerate}
\item The Lie bracket is bilinear.
\item The Lie bracket is antisymmetric
\[[x, y] = -(-1)^{(|x|-1)(|y|-1)}[y, x].\]
\item The Lie bracket satisfies the graded Jacobi relation.
\[(-1)^{(|x|-1)(|z|-1)}[x, [y, z]]
+ (-1)^{(|x|-1)(|y|-1)}[y, [z, x]]
+ (-1)^{(|y|-1)(|z|-1)}[z, [x, y]]=0\]
\item The Lie bracket satisfies the relation $[x,[x,x]]=0$. (This relation follows from the Jacobi relation if $p\neq 3$.)
\item If $p$ is odd, the operation $\xi$ is zero on even degree elements. 
\item We have
\[\xi(x+y)=\xi(x)+\xi(y)+\sum_{i=1}^{p-1}d_2^i(x)(y)\]
where the operations $d_2^i$ are described in \cite[p.218]{cohenladamay}.
\item We have
\[\xi(\lambda x)=\lambda\xi(x)\]
for all $\lambda\in\FF_p$.
\item We have 
\[[x,\xi(y)]=\rm{ad}^p(y)(x)\]
\end{enumerate}
\end{definition}

\begin{remark}
Over $\FF_2$, relation (6) becomes
\[\xi(x+y)=\xi(x)+\xi(y)+[x,y].\]
One consequence of this is that $[x,x]=0$ whatever the degree of $x$ is. In other characteristics this only holds for elements of odd degree.
\end{remark}

We denote by $\rlie_1\alg$ the category of shifted restricted Lie algebras.

\begin{definition}
A $\W_1$-algebra is a  graded vector space over $\FF_p$ equipped with
\begin{itemize}
\item A degree $-1$ Lie bracket
\[[-,-]\colon V^i\otimes V^j\to V^{i+j-1}\]
\item A restriction
\[\xi\colon V^i\to V^{pi-p+1}\]
\item An additional linear map
\[\zeta\colon V^i\to V^{pi-p+2}\]
(if $p=2$, this map does not exist)
\item A product
\[V^i\otimes V^j\to V^{i+j}\]
\end{itemize}

such that the following axioms are satisfied.
\begin{enumerate}
\item The Lie bracket and $\xi$ make $V$ into a shifted restricted Lie algebra.
\item The product is bilinear and graded commutative.
\item The operation $\zeta$ vanishes on even degree elements.
\item We have the formula
\[[x,\zeta y]=0\]
\item The bracket is a derivation with respect to the product in each variable.
\[[x,yz]=[x,y]z+(-1)^{|y|(|x|+1)}[x,z]y\]
\item The operations $\xi$ and $\zeta$ each satisfy a Cartan formula
\[\xi(xy)=x^p\xi(y)+\xi(x)y^p+\sum\Gamma_{i,j}x^iy^j,\]
\[\zeta(xy)=\zeta(x)y^p-x^p\zeta(y)\]
where the term $\Gamma_{i,j}$ are described on page 335 of \cite{cohenladamay}.
(If $p=2$ the Cartan formula is
\[\xi(xy)=x^2\xi(y)+\xi(x)y^2+x[x,y]y.)\]
\end{enumerate}
\end{definition}

Let us denote by $\W_1\alg$ the category of $\W_1$-algebras and by
\[F^{\W_1}\colon \cat{grVect}_{\FF_p}\to \W_1\alg\]
the free $\W_1$-algebra monad.

\begin{theorem}[Cohen]
Let $C$ be a cochain complex of $\FF_p$-vector spaces. Let $E_2$ denote the free $C_*(E_2,\FF_p)$-algebra monad. There is a natural isomorphism
\[H^*(E_2(C))\cong F^{\W_1}(H^*(C))\]
In particular, the cohomology of a dg-$E_2$-algebra is naturally a $\W_1$-algebra.
\end{theorem}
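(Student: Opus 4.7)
The plan is to prove this by a natural-transformation argument: construct a canonical map $F^{\W_1}(H^*(C))\to H^*(\E_2(C))$ by realizing each generator of the $\W_1$-structure as a genuine natural operation on the cohomology of $\E_2$-algebras, and then show this map is a bijection on underlying graded vector spaces.

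For the construction, recall that $\E_2(n)$ is homotopy equivalent to the unordered configuration space of $n$ points in $\RR^2$. Each operation of the $\W_1$-structure arises from a specific homology class of one of these spaces: the commutative product from the generator of $H_0(\E_2(2);\FF_p)$; the degree $-1$ Browder bracket from the generator of $H_1(\E_2(2);\FF_p)\cong H_1(S^1;\FF_p)$; and the operations $\xi$ and $\zeta$ from the top-dimensional class in $H_*(\E_2(p);\FF_p)$ pushed forward along the transfer from $B\Sigma_p$, together with its Bockstein. These classes act naturally on $H^*$ of any $\E_2$-algebra, and in particular give operations on $H^*(\E_2(C))$. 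Composing with the tautological inclusion $H^*(C)\hookrightarrow H^*(\E_2(C))$ coming from the arity-one component, and invoking the universal property of $F^{\W_1}$, then yields the desired natural transformation, provided one first verifies that these operations satisfy each relation listed in the definition of a $\W_1$-algebra. Every such relation is an equality between two ways of composing homology classes in some $H_*(\E_2(n);\FF_p)$ via the operadic composition maps, which can be checked by direct computation; this is essentially the content of Cohen's original work in \cite[Chapter III]{cohenladamay}.

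To conclude that the map is an isomorphism, I would use the arity decomposition
\[\E_2(C)\simeq \bigoplus_n C_*(\E_2(n);\FF_p)\otimes^{\mathbb{L}}_{\Sigma_n} C^{\otimes n}\]
and compare the resulting calculation of $H^*(\E_2(C))$, expressed in terms of the Arnold--Cohen basis for $H_*(\Conf_n(\RR^2);\FF_p)$ together with its $\Sigma_n$-action, against a Poincaré--Birkhoff--Witt style monomial basis for $F^{\W_1}$ built from admissible sequences of $\xi,\zeta$, admissible Lie brackets in the Browder operation, and commutative monomials in the product. The main obstacle is this final bookkeeping step: one must set up a normal form for $\W_1$-words carefully enough to match Cohen's basis bijectively in each bidegree, since the abstract defining relations alone do not obviously yield a manageable PBW basis without importing combinatorial information from the geometric side (specifically, the cell structure on configuration spaces provided by a stratification according to the closest pair of points).
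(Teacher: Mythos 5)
The paper does not prove this statement at all: it is quoted verbatim as Cohen's theorem with a pointer to \cite[Chapter III]{cohenladamay}, so there is no in-paper argument to compare yours against. What you have written is, in outline, a summary of how Cohen's proof goes (compute $H_*(F(\RR^2,n);\FF_p)$ with its $\Sigma_n$-action, extract the operations, and match an admissible-monomial basis of the free object against the resulting basis of $\bigoplus_n H_*(C_*(\E_2(n))\otimes_{\Sigma_n}C^{\otimes n})$). But as a proof it has two problems. First, the construction step is wrong as stated for $\xi$ and $\zeta$: a class in the non-equivariant homology $H_*(\E_2(p);\FF_p)$ acting through the operadic composition can only ever produce a \emph{multilinear} operation in $p$ variables, whereas $\xi$ is not even additive (see relation (6), $\xi(x+y)=\xi(x)+\xi(y)+\sum_i d_2^i(x)(y)$). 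These operations must instead be extracted from the $\Sigma_p$-equivariant homology $H_*\bigl(C_*(\E_2(p))\otimes_{\Sigma_p}(s^q\FF_p)^{\otimes p}\bigr)$, i.e.\ from the homology of $F(\RR^2,p)\times_{\Sigma_p}X^p$ rather than of $F(\RR^2,p)$; computing that equivariant homology is precisely where the difficulty of Cohen's Chapter III lies, and no transfer from $B\Sigma_p$ produces it for free.

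Second, you explicitly defer the isomorphism step (``the main obstacle is this final bookkeeping step''), but that step \emph{is} the theorem: the identification of $H_*\bigl(C_*(\E_2(n))\otimes_{\Sigma_n}C^{\otimes n}\bigr)$ in all aritites with the weight-$n$ part of the free $\W_1$-algebra, via the basis of monomials $\zeta^\epsilon\xi^i l_\alpha$ recorded on p.~227 of \cite{cohenladamay} (and used in the paper's own Proposition~\ref{prop : free functor}), is the entire content of Cohen's computation. So your proposal correctly names the strategy but proves neither the existence of the operations in the required (non-multilinear) form nor the freeness statement; both are exactly what the citation to Cohen is carrying. A minor point you could add to make the reduction honest: since $\FF_p$ is a field and each $\E_2(n)$ is a free $\Sigma_n$-space, the coinvariants $C_*(\E_2(n))\otimes_{\Sigma_n}C^{\otimes n}$ are already derived, so the functor $C\mapsto H^*(\E_2(C))$ depends only on $H^*(C)$; this is what lets one transport Cohen's space-level computation of $H_*(C_2X)$ to arbitrary cochain complexes.
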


\begin{proof}
This is done in \cite[Theorem III.3.1]{cohenladamay} for $E_2$-algebras coming from $E_2$-spaces and extended to any $E_2$-algebra in \cite[Theorem 16.4]{galatiuscellular}.
\end{proof}

By standard abstract nonsense, the restriction functor
\[\W_1\Alg\to \rlie_1\Alg\]
has a left adjoint that we shall denote by $F_{\rlie_1}^{\W_1}$. 

We shall need an explicit description of this left adjoint. Let $p$ be an odd prime, given a graded vector space, $V$ over $\FF_p$, we denote by $\zeta V$ the graded vector space
\[\zeta V=\bigoplus_{i \textrm{ odd}}s^{i-pi+p-2}V^i.\]
There is an operation 
\[\zeta\colon V\to\zeta V\]
taking $v\in V^i$ with $i$ odd to the element $v$ in the summand $s^{i-pi+p-2}V^i$ of $\zeta V$. This operation is not a map of graded vector spaces, instead it behaves with respect to the degree as the operation $\zeta$ in a $\W_1$-algebra. We define $\sym_{\zeta}$ to be the following functor from the category of graded vector spaces to itself
\[V\mapsto \sym(V\oplus\zeta V).\]

\begin{proposition}\label{prop : free functor}
Let $p$ be an odd prime. The composed functor
\[\rlie_1\alg\xrightarrow{F_{\rlie_1}^{\W_1}}\W_1\alg\xrightarrow{\rm{forget}}\cat{grVect}_{\FF_p}\]
is isomorphic to $\mathfrak{g}\mapsto\sym_{\zeta}(\mathfrak{g})$. A similar proposition holds over $\FF_2$ if we replace $\sym_{\zeta}$ by $\sym$.
\end{proposition}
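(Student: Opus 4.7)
The plan is to construct directly a $\W_1$-algebra structure on $\sym_\zeta(\mathfrak{g})$ extending the shifted restricted Lie algebra structure on $\mathfrak{g}$, and to check that the resulting functor $\mathfrak{g}\mapsto\sym_\zeta(\mathfrak{g})$ is left adjoint to the restriction functor $\W_1\alg\to\rlie_1\alg$. By uniqueness of left adjoints this identifies the underlying graded vector space of $F^{\W_1}_{\rlie_1}(\mathfrak{g})$ with $\sym_\zeta(\mathfrak{g})$. I describe the construction for $p$ odd; the $p=2$ case is the same argument with every reference to $\zeta$ removed.

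On the graded vector space $\sym_\zeta(\mathfrak{g})=\sym(\mathfrak{g}\oplus\zeta\mathfrak{g})$ I install: the symmetric-algebra product; the bracket extended from $\mathfrak{g}$ by the Leibniz rule of axiom (5), decreeing $[x,\zeta v]=0$ on the formal generators $\zeta v$; the operation $\xi$ declared to vanish on even degree elements (in particular on $\zeta\mathfrak{g}$) and extended from its value on $\mathfrak{g}$ via the Cartan formula of axiom (6); and the linear operation $\zeta$ which vanishes on even elements, sends an odd generator $v\in\mathfrak{g}$ to the formal symbol $\zeta v$, and is forced on products by $\zeta(xy)=\zeta(x)y^p-x^p\zeta(y)$. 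A useful simplification throughout is that for $p$ odd one has $x^p=0$ for every odd-degree element of a graded commutative algebra; this collapses the Cartan formula for $\zeta$ to $\zeta(vE)=\zeta(v)E^p$ on a monomial with a unique odd primitive factor $v\in\mathfrak{g}$ and even remainder $E$ (and to zero on monomials containing two or more odd factors from $\mathfrak{g}$), after which well-definedness in the symmetric algebra is immediate.

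Most of the $\W_1$-axioms on $\sym_\zeta(\mathfrak{g})$ then follow from the corresponding axioms on $\mathfrak{g}$ and the Leibniz/Cartan identities by the same arguments one uses to build a free Poisson algebra. The two less routine identities are $[x,\zeta y]=0$ and $[x,\xi y]=\rm{ad}^p(y)(x)$ for arbitrary $y\in\sym_\zeta(\mathfrak{g})$; both rest on the characteristic-$p$ identity $[x,y^p]=p[x,y]y^{p-1}=0$ (together with $y^p=0$ already for $y$ odd), which guarantees that all the extra terms produced when the Leibniz rule hits a $p$-th power inside $\xi y$ or $\zeta y$ vanish. Once the structure is in place, the universal property is routine: a map $f:\mathfrak{g}\to W$ of shifted restricted Lie algebras into a $\W_1$-algebra extends uniquely to $\tilde f:\sym_\zeta(\mathfrak{g})\to W$ by $v_1\cdots v_n\mapsto f(v_1)\cdots f(v_n)$ and $\zeta v\mapsto\zeta f(v)$, and preservation of bracket, $\xi$ and $\zeta$ on products is forced by the Leibniz/Cartan axioms in $W$.

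The main obstacle I expect is controlling the non-additive operation $\xi$: the Cartan formula dictates $\xi$ on a product via Cohen's polynomials $\Gamma_{i,j}$, while the shifted restricted Lie axiom imposes $\xi(x+y)=\xi(x)+\xi(y)+\sum_i d_2^i(x)(y)$ on sums, and the consistency of these two prescriptions when monomials are added is a direct combinatorial check rather than a derivation-type argument. The identities to be verified are precisely those asserted to hold in every $\W_1$-algebra, so this is a bookkeeping exercise rather than a conceptual one, but it is where the real work of the proof sits.
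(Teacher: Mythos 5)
Your overall strategy --- equip $\sym_\zeta(\mathfrak{g})$ with explicit operations and verify that it represents the right adjoint's restriction, i.e.\ satisfies the universal property of $F_{\rlie_1}^{\W_1}(\mathfrak{g})$ --- is a legitimate alternative to the paper's argument, and if completed it would even prove slightly more (an explicit model of the free $\W_1$-algebra on an arbitrary $\mathfrak{g}$, not merely an identification of underlying graded vector spaces). The routine parts are correctly sketched: the Leibniz extension of the bracket, the vanishing $[x,\zeta y]=0$ via $[x,y^p]=p[x,y]y^{p-1}=0$, and the uniqueness/existence of the extension of a map $\mathfrak{g}\to W$ once the structure is in place. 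But the proposal has a genuine gap exactly where you locate ``the real work'': you never verify that the operation $\xi$, prescribed on monomials by the Cartan formula with its $\Gamma_{i,j}$ correction terms and on sums by the $d_2^i$-twisted additivity, is a single well-defined operation on $\sym_\zeta(\mathfrak{g})$ satisfying the remaining axioms --- consistency of the two prescriptions, independence of the bracketing when the Cartan formula is iterated over a monomial, and $[x,\xi y]=\mathrm{ad}^p(y)(x)$ for $y$ an arbitrary polynomial rather than a generator. Calling this ``a bookkeeping exercise'' undersells it: this coherence is precisely the algebraic content of Cohen's computation of the free $\W_1$-algebra, and the check cannot even be set up without the explicit formulas for $\Gamma_{i,j}$ and $d_2^i$ from \cite{cohenladamay}, which your write-up never engages with. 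As it stands, the candidate object is not shown to be a $\W_1$-algebra, so the adjunction argument does not get off the ground.

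The paper sidesteps all of this. It observes that both $\mathfrak{g}\mapsto F_{\rlie_1}^{\W_1}(\mathfrak{g})$ and $\mathfrak{g}\mapsto\sym_\zeta(\mathfrak{g})$ preserve ordinary sifted colimits and that $\rlie_1\alg$ is algebraic, so it suffices to compare the two functors on free shifted restricted Lie algebras on finite-dimensional graded vector spaces; there $F_{\rlie_1}^{\W_1}(\mathfrak{g})\cong F^{\W_1}(V)$, and Cohen's basis of admissible monomials $\zeta^\epsilon\xi^i l_\alpha$ (\cite[page 227]{cohenladamay}) visibly matches $\sym_\zeta$ applied to the standard basis $\{\xi^i l_\alpha\}$ of the free shifted restricted Lie algebra. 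If you want to salvage your direct construction, the most economical fix is the same move: either certify your candidate in the free case by importing Cohen's description and then transport the identification along sifted colimits, or actually carry out the $\xi$-coherence verification with the explicit $\Gamma_{i,j}$ and $d_2^i$ in hand.
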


\begin{proof}
Both functors of $\mathfrak{g}$ preserve ordinary sifted colimits. Moreover, $\rlie_1\alg$ is an algebraic category, i.e., it is the completion of its subcategory of free algebras on a finite dimensional vector space under ordinary sifted colimits. Therefore, it suffices to prove that both functors coincide on the category of free shifted restricted Lie algebras on a finite dimensional graded vector space. 

If $\mathfrak{g}$ is the free restricted Lie algebra on $V$, then a basis of $\mathfrak{g}$ is given by symbols $\xi^il_\alpha$ where the symbols $l_\alpha$ form an arbitrary basis of the free Lie algebra on $V$ and the exponent $i$ is arbitrary if $l_\alpha$ is of even degree and is zero otherwise. On the other hand, $F_{\rlie_1}^{\W_1}(\mathfrak{g})\cong F^{\W_1}(V)$ is explicitly described in \cite[page 227]{cohenladamay}. It is the free commutative algebra on elements of the form $\zeta^\epsilon\xi^il_\alpha$ where $l_\alpha$ and $\xi^i$ are as before and $\epsilon\in\{0,1\}$ and is required to be $0$ if $\xi^il_\alpha$ is of even degree. The result follows from this explicit description.

The case $p=2$ is similar.
\end{proof}

\begin{remark}\label{rem : even degree}
Observe that if $\mathfrak{g}$ is concentrated in even degrees (in which case the Lie bracket and $\xi$ must be zero for degree reasons), then $F_{\rlie_1}^{\W_1}(\mathfrak{g})$ is simply $\sym(\mathfrak{g})$ with trivial operations $[-,-]$, $\xi$ and $\zeta$.
\end{remark}

\begin{proposition}\label{prop : free is derived}
Let $V$ be a graded vector space, then 
\[\L \sym_{\zeta}(V)\simeq \sym_{\zeta}(V)\]
and 
\[\L \sym(V)\simeq \sym(V)\]
\end{proposition}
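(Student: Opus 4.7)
The plan is to invoke the characterization of the non-abelian derived functor recalled in Remark \ref{remark : non-abelian derived}: $\L F$ is the essentially unique sifted-colimit-preserving extension of $F$ restricted to the subcategory of compact projective objects. Both statements should then reduce to (a) checking that $\sym$ and $\sym_\zeta$, as endofunctors of $\cat{grVect}_{\FF_p}$, preserve sifted colimits, and (b) observing that the identity $\L F \simeq F$ holds tautologically on compact projectives.

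For (b), the compact projective objects of $\cat{grVect}_{\FF_p}$ are exactly the finite-dimensional graded vector spaces, and the embedding into $s\cat{grVect}_{\FF_p}$ sends such a $V$ to its constant simplicial object. On this subcategory $\L F$ agrees with $F$ by construction of the non-abelian derived functor, so $\L\sym(V)\simeq \sym(V)$ and $\L\sym_\zeta(V)\simeq\sym_\zeta(V)$ for finite-dimensional $V$ without any additional argument.

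For (a), I would argue that $\sym$ is the left adjoint of the forgetful functor from commutative $\FF_p$-algebras to $\cat{grVect}_{\FF_p}$, which, being a forgetful functor from a category of algebras over a monad, preserves sifted colimits. The composite $\cat{grVect}_{\FF_p}\to\cat{grVect}_{\FF_p}$ thus preserves sifted colimits. Since $V\mapsto V\oplus \zeta V$ is additive (hence preserves all colimits), the same holds for $\sym_\zeta=\sym\circ(\id\oplus\zeta)$.

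With (a) and (b) in place, I would write an arbitrary $V$ as the filtered colimit $\colim_i V_i$ of its finite-dimensional sub-vector-spaces and conclude
\[\L\sym_\zeta(V)\simeq \colim_i \L\sym_\zeta(V_i)\simeq \colim_i \sym_\zeta(V_i)\simeq \sym_\zeta(V),\]
using sifted colimit preservation of $\L\sym_\zeta$ (built in) on the left and of $\sym_\zeta$ (by (a)) on the right, and similarly for $\sym$. I do not foresee any real obstacle here: the argument is a formal consequence of the universal property of the non-abelian derived category together with the standard fact that the symmetric algebra functor commutes with sifted colimits of underlying vector spaces.
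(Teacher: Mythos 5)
Your argument is correct and is essentially the paper's own proof: $\L F$ agrees with $F$ on finite-dimensional (compact projective) graded vector spaces by construction, both sides preserve filtered colimits, and every graded vector space is a filtered colimit of finite-dimensional ones. The only cosmetic difference is that you justify colimit-preservation of $\sym$ via the monadic adjunction (where one should really note directly that $\sym$ is finitary, i.e.\ built from finite tensor powers, rather than appeal to a general statement about forgetful functors), whereas the paper simply observes that any filtered-colimit-preserving endofunctor of $\cat{grVect}_k$ has this property.
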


\begin{proof}
This proposition holds for any functor $F\colon \cat{grVect}_k\to\cat{grVect}_k$ that preserves filtered colimits. Indeed, $\L F$ coincides with $F$ on finite dimensional vector spaces by construction, moreover, both $\L F$ and $F$ preserve filtered colimits, it follows that they must coincide on any graded vector space.
\end{proof}

\section{Computation of the obstruction group}

\begin{definition}
We call a bigraded abelian group even (resp. odd) if it is concentrated in bidegrees $(s,t)$ such that $s+t$ is even (resp. odd).
\end{definition}

\begin{lemma}\label{lemm : Hochschild}
Let $A=\Lambda(V)$ be the exterior algebra on a graded vector space $V$ concentrated in odd degrees and of finite total dimension. Let $M$ be an $A$-module concentrated in odd degrees and finite-dimensional in each degree. Since $A$ is commutative, we may view $M$ as an $A$-bimodule. Then 
\[\AQ_{\rm{Ass}\alg}^{*,*}(A,M)\]
is even.
\end{lemma}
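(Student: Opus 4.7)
My plan is to identify this Quillen cohomology with a shift of Hochschild cohomology, and then use the Koszul structure of the exterior algebra to force parities via a small bimodule resolution. For the first step, abelian group objects in $\rm{Ass}\alg/A$ are square-zero extensions by $A$-bimodules and the standard identification gives
\[\AQ^s_{\rm{Ass}\alg}(A, M) \cong HH^{s+1}(A, M),\]
so the statement becomes: $HH^{n,t}(A, M)$ vanishes unless $n + t$ is odd.

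Since $V$ has finite total dimension, $A = \Lambda(V)$ is a Koszul associative algebra with Koszul dual $\sym(V^*)$, and its minimal bimodule resolution $K_\bullet \to A$ takes the form
\[K_n = A \otimes W_n \otimes A\]
in homological degree $n$, where $W_n$ is the $n$-th piece of the Koszul dual coalgebra (over $\FF_p$ this is the divided power $\Gamma^n(V)$). For the parity count only the internal degree of $W_n$ matters: since $V$ is concentrated in odd degrees, every homogeneous element of $W_n$ has internal degree equal to a sum of $n$ odd integers, hence of parity $n \bmod 2$.

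Applying $\Hom_{A^e}(-, M)$ yields a Hochschild cochain complex whose term in bidegree $(n,t)$ is $\Hom_{\FF_p}^t(W_n, M)$. A graded map of internal degree $t$ from $W_n$ into $M$ must land in $M$ in a degree of parity $n + t$; since $M$ is concentrated in odd degrees, this requires $n + t$ to be odd. The parity constraint at the cochain level descends to cohomology, so $HH^{n, t}(A, M)$ vanishes unless $n + t$ is odd. Substituting $n = s + 1$ then gives the required evenness of $\AQ^{s, t}(A, M)$. The main technical point is setting up the Koszul bimodule resolution over $\FF_p$ in the stated form (for instance via Priddy's framework, using that $V$ is finite-dimensional); beyond that the proof is pure bookkeeping of internal degrees.
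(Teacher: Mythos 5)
Your argument is correct, but it takes a genuinely different route from the paper. The paper first reduces $\rm{HH}^*(A,M)$ to $\rm{Ext}^*_A(k,\tilde M)$ via the Cartan--Eilenberg comparison (using the algebra map $A\to A\otimes A$, $v\mapsto v\otimes 1-1\otimes v$, and the observation that the induced $A$-action on $M$ is trivial), then writes this as $\rm{Ext}^*_A(k,k)\otimes_k M$ using the degreewise finiteness of $M$, and finally computes $\rm{Ext}^*_A(k,k)$ by tensoring the periodic resolutions of $k$ over $k[x]/x^2$. You instead stay in the bimodule world and read the parity directly off the Koszul bimodule resolution $A\otimes W_\bullet\otimes A$; the underlying mechanism is the same (the weight-$n$ piece of the Koszul dual sits in internal degree of parity $n$), but your version bypasses both the Cartan--Eilenberg reduction and the finiteness of $M$, at the cost of having to invoke Koszulness of $\Lambda(V)$ over $\FF_p$ in all characteristics, including $p=2$ where $\Lambda(V)=T(V)/(v^2)$ is not $\sym(V)$ (it is still a standard quadratic Koszul algebra, so this is fine, but it deserves a word). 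One small caveat: the identification $\AQ^s(A,M)\cong HH^{s+1}(A,M)$ holds as stated only for $s>0$; at $s=0$ one has $\AQ^0(A,M)\cong\rm{Der}(A,M)$, whereas $HH^1$ is derivations modulo inner derivations. In the present situation the bimodule structure on $M$ is symmetric for the Koszul sign rule, so inner derivations vanish and your blanket statement happens to be true, but you should either say this or treat $s=0$ separately as the paper does (where the evenness of $\rm{Der}(A,M)\subset\Hom_k(A,M)$ is immediate anyway).
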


\begin{proof}
 There is an isomorphism
\begin{align*}
\AQ^{s,*}(A,M)&\cong \rm{Der}(A,M)\;\rm{if}\;s=0\\
              &\cong \rm{HH}^{s+1}(A,M)\;\rm{if}\;s>0
\end{align*}
Since $\rm{Der}(A,M)\subset \Hom_k(A,M)$ is concentrated in even degrees, it suffices to prove that $\rm{HH}^{*,*}(A,M)$ is odd.

There exists a unique map of commutative algebras
\[A\to A\otimes A\]
sending $v\in V$ to $v\otimes 1-1\otimes v$. Indeed, if $k$ is of characteristic different from $2$, then $\Lambda(V)\cong\sym(V)$ is free and if $k$ is of characteristic $2$, then we do indeed have
\[(v\otimes 1-1\otimes v)^2=0\]
By \cite[Theorem X.6.1]{cartaneilenberg}, we have an isomorphism
\[\rm{HH}^*(A,M)\cong\rm{Ext}^*_A(k,\tilde{M})\]
where $\tilde{M}$ is $M$ equipped with the $A$-module structure induced by restriction of scalars along the map $A\to A\otimes A$ described above. By definition of the $A$-bimodule structure on $M$, the $A$-module $\tilde{M}$ is then simply the $k$-vector space $M$ with the trivial $A$-module structure. So, using the finite dimensionality of $M$, we have
\[\rm{Ext}_A^*(k,\tilde{M})\cong \rm{Ext}^*_A(k,k)\otimes_kM\]
Since $M$ is odd, it suffices to prove that $\rm{Ext}^*_A(k,k)$ is even. If $V$ is one-dimensional so that $A=k[x]/x^2$. Then a free resolution of $k$ as an $A$-module is given by
\[k\leftarrow{A}\xleftarrow{\times x} s^{|x|}A \xleftarrow{\times x}s^{2|x|}A\xleftarrow{\times x}\ldots\]
It follows that the bigraded abelian group $\rm{Ext}^{*,*}_A(k,k)$ is even. For a general $V$, $\rm{Ext}_A(k,k)$ is a tensor product of finitely many bigraded abelian groups of this form, therefore the answer is still even.
\end{proof}

\begin{lemma}\label{lemm : main computation}

Let $V$ be a graded vector space concentrated in even degrees viewed as a shifted restricted Lie algebra with trivial bracket and restriction. Let $M$ be an abelian group object in the slice category $\W_1\Alg/F_{\rlie_1}^{\W_1}(V)$ whose underlying graded vector space is even. Then $\AQ^{*,*}_{\W_1}(F_{\rlie_1}^{\W_1}(V),M)$ is even.
\end{lemma}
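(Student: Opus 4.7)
The plan is to apply Proposition \ref{prop : base change} along the free functor $F_{\rlie_1}^{\W_1}: \rlie_1\Alg \leftrightarrows \W_1\Alg : U$, so as to reduce the Quillen cohomology computation in $\W_1$-algebras to a calculation in shifted restricted Lie algebras, and thence to a Koszul-type Ext computation.

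By Remark \ref{rem : even degree}, $F_{\rlie_1}^{\W_1}(V)$ is the commutative algebra $\sym(V)$ with trivial bracket, $\xi$, and $\zeta$. The base change hypothesis $\L F_{\rlie_1}^{\W_1}(V) \simeq F_{\rlie_1}^{\W_1}(V)$ can be verified by choosing a simplicial resolution $P_\bullet \to V$ in $s(\rlie_1\Alg)$ by free shifted restricted Lie algebras. Proposition \ref{prop : free functor} identifies the underlying simplicial graded vector space of $F_{\rlie_1}^{\W_1}(P_\bullet)$ with $\sym_\zeta(P_\bullet) = \sym(P_\bullet \oplus \zeta P_\bullet)$. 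Since $\zeta$ is a direct sum of shifts of the odd-degree-extraction functor (hence preserves weak equivalences and sifted colimits) and $\zeta V = 0$ because $V$ is even, the map $\zeta P_\bullet \to 0$ is a weak equivalence. Combined with Proposition \ref{prop : free is derived}, this gives $\sym_\zeta(P_\bullet) \simeq \sym(V)$ in $s(\cat{grVect})$. Since weak equivalences in $s(\W_1\Alg)$ are detected on underlying simplicial graded vector spaces (via the free-on-one-class compact projective generators), this suffices.

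Proposition \ref{prop : base change} then yields $\AQ^{*,*}_{\W_1}(F_{\rlie_1}^{\W_1}(V), M) \cong \AQ^{*,*}_{\rlie_1}(V, UM)$. To compute the right-hand side, note that $V$ corresponds under the shifting identification to the abelian restricted Lie algebra $s^{-1}V$ in odd internal degrees, whose restricted universal enveloping algebra is the exterior algebra $\Lambda(s^{-1}V)$. Abelian objects in $\rlie_1\Alg/V$ are thus $\Lambda(s^{-1}V)$-modules, and a Koszul resolution of the augmentation module provides a cofibrant model for $L_V^{\rlie_1}$, with free generators in bigrades $(s, s \cdot |s^{-1}V|)$ for $s \geq 0$. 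Because $|s^{-1}V|$ is odd, the bigrade sum $s + s|s^{-1}V| = s(1 + |s^{-1}V|)$ is always even, and this parity propagates through the Ext computation to yield the claim.

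The main obstacle I expect is the verification $\L F_{\rlie_1}^{\W_1}(V) \simeq F_{\rlie_1}^{\W_1}(V)$ in the first step, which requires careful coordination of simplicial resolutions across $s(\rlie_1\Alg)$, $s(\W_1\Alg)$, and $s(\cat{grVect})$, together with correct handling of the non-linear functor $\sym_\zeta$. A secondary point of care is the Koszul description of the cotangent complex in the restricted (rather than ordinary) Lie setting, ensuring that the trivial restriction on $V$ does not introduce unexpected bidegree contributions.
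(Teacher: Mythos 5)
Your first reduction is exactly the paper's: base change along $F_{\rlie_1}^{\W_1}\dashv U$, with the hypothesis $\L F_{\rlie_1}^{\W_1}(V)\simeq F_{\rlie_1}^{\W_1}(V)$ checked via Proposition \ref{prop : free functor}, the vanishing of $\zeta V$ for $V$ even, and Proposition \ref{prop : free is derived}. That part is fine. Where you diverge is the endgame: the paper applies Proposition \ref{prop : base change} a second time, along the (restricted) universal enveloping algebra functor --- this is Dokas's theorem, and it is not formal in the restricted setting --- to land in $\AQ_{\rm{Ass}}(\Lambda(s^{-1}V),s^{-1}M)$ and then invokes Lemma \ref{lemm : Hochschild}. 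You instead try to compute $L_V^{\rlie_1}$ directly, and this is where the argument has a genuine gap.

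The decisive problem is that your parity count never uses the degrees of $M$. It cannot be correct as written: replacing $M$ by $sM$ shifts $\AQ^{*,*}$ in the $t$-direction and flips its parity, so no argument can conclude evenness for arbitrary coefficients. The lemma implicitly assumes $M$ is concentrated in even degrees (it is $H^*(B)$ in the application), and the paper's proof uses this essentially: $s^{-1}M$ is \emph{odd}, Lemma \ref{lemm : Hochschild} shows $\rm{HH}^{*,*}(\Lambda(s^{-1}V),s^{-1}M)$ is \emph{odd}, and only the shift $\AQ^{s}\cong\rm{HH}^{s+1}$ converts this into evenness. Relatedly, your model for $L_V^{\rlie_1}$ conflates the cotangent complex with the Koszul resolution of $k$: for an abelian restricted Lie algebra $\mathfrak{g}$ one has $\AQ^s(\mathfrak{g},-)\cong\rm{Ext}^{s+1}_{u(\mathfrak{g})}(k,-)$ for $s\geq 1$, so $L_{\mathfrak{g}}$ is modelled by the augmentation ideal, whose minimal resolution over $\Lambda(x)$ has its degree-$s$ generator in internal degree $(s+1)|x|$ (total degree odd), not $s|x|$ (total degree even). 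Pairing odd-total-degree generators against the odd module $s^{-1}M$ gives the even answer; your version pairs even-total-degree generators against an unspecified module. The two omissions happen to cancel if one reads your $M$ as the unshifted even module, but as written the sentence ``this parity propagates through the Ext computation'' is not a proof --- it is precisely the bookkeeping (the $+1$ homological shift and the $s^{-1}$ internal shift) that the paper's two-step reduction is designed to control.
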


\begin{proof}
By Proposition \ref{prop : base change}, we have an isomorphism
\[\AQ^*_{\W_1}(F_{\rlie_1}^{\W_1}(V),M)\cong \AQ^*_{\rlie_1}(V,M)\]
There is an adjunction
\[U\colon\rm{rLie}_1\cat{Alg}\leftrightarrows \rm{Ass}\cat{Alg}\colon\rm{forget} \]
between shifted restricted Lie algebras and associative algebras.  The right adjoint takes an associative algebra $A$, and sends it to $sA$ with Lie bracket given by
\[[sx,sy]=s(xy-(-1)^{|x||y|}yx)\]
and restriction given by
\[\xi(sx)=s(x^p)\]
(the notation $sx$ represents the element $x\in A$ viewed as an element of $sA$). The left adjoint $U$ sends $\mathfrak{g}$ to its universal enveloping algebra defined as the quotient $T(s^{-1}\mathfrak{g})/I$ where $I$ is the two-sided ideal generated by elements of the form $s^{-1}[g,h]-(s^{-1}g)(s^{-1}h)+(-1)^{(|g|+1)(|h|+1)}(s^{-1}h)(s^{-1}g)$ and $(s^{-1}g)^p-s^{-1}\xi(g))$ for any $g$ and $h$ homogeneous elements of $\mathfrak{g}$. This adjunction appears in \cite[Chapter V, Theorem 12]{jacobson} for restricted Lie algebra with bracket of degree zero.

In particular, one easily checks from the above formula that the universal enveloping algebra of the shifted restricted Lie algebra $V$ is simply the exterior algebra $\Lambda(s^{-1}V)$ regardless of the characteristic of the field. Then we can use again Proposition \ref{prop : base change}, and find an isomorphism
\[\AQ^*_{\rlie_1}(V,M)\simeq\AQ_{\rm{Ass}}^*(\Lambda(s^{-1}V),s^{-1}M) \]
which is even by the previous lemma. 
\end{proof}

\begin{remark}
The isomorphism between restricted Lie cohomology and associative cohomology of the universal enveloping algebra appears as \cite[Theorem 14.2]{dokasquillencohomologydividedpower}, although in their case, the restricted algebra structure is not shifted. The idea of reducing André-Quillen cohomology of $\W_1$-algebras to André-Quillen cohomology of restricted Lie algebras  was also explored in \cite{richterspectral} at the prime $2$. See for example \cite[Proposition 7.4]{richterspectral}.
\end{remark}

\begin{theorem}\label{theorem : main}

Let $A$ be a dg-$E_2$-algebra over $\FF_p$ such that $H^*(A)=\rm{Sym}(V)$ with $V$ a finite dimensional graded vector space concentrated in even degrees. Let $B$ be a dg-$E_2$-algebra also concentrated in even degrees and degreewise finite dimensional. Then
\begin{enumerate}
\item for any map of $\FF_p$-algebras
\[f\colon H^*(A)\to H^*(B)\]
there is a map in the homotopy category of $E_2$-algebras
\[\tilde{f}\colon A\to B\]
such that $H^*(\tilde{f})=f$.
\item Any $E_2$-algebra whose cohomology ring is isomorphic to the cohomology ring of $A$ must be quasi-isomorphic to $A$.
\end{enumerate}
\end{theorem}

\begin{proof}
First observe that (2) follows easily from (1). Thanks to Remark \ref{rem : even degree}, a map of $\FF_p$-algebras
\[f\colon H^*(A)\to H^*(B)\]
is automatically a map of $\W_1$-algebras. We may thus use the spectral sequence of \cite[Theorem 4.5]{johnsonnoel} computing the mapping space $\rm{map}_{E_2-\alg}(A,B)$. The relevant obstructions to lifting $f$ live in $E_2^{t,t-1}$. Thanks to \cite[Theorem B]{johnsonnoel}, this group can be identified with $\AQ^{t}_{\W_1}(A,s^{t-1}H^*(B))$. The result then follows from the previous lemma.
\end{proof}

\begin{remark}
A statement of this form is usually called \emph{intrinsic formality}. We are claiming that, up to quasi-isomorphisms, there is a unique $E_2$-algebra whose cohomology ring is a given polynomial algebra on even degree classes.
\end{remark}

\begin{remark}
A similar theorem was obtained by Devalpurkar (see \cite[Lemma 2.1.10]{devalapurkarku}) under the slightly stronger hypothesis that the $E_2$-structure can be promoted to an $E_3$-structure.
\end{remark}

In the next section, we will push this result to certain diagrams of $E_2$-algebras but we can already give one application originally due to Bayındır and Moulinos  (see \cite[Theorem 1.3]{bayindirmoulinos}).

\begin{theorem}[Bayındır, Moulinos]\label{theorem : bokstedt} 
Let $H\FF_p$ denote the Eilenberg-MacLane spectrum of $\FF_p$. There is a weak equivalence of $E_2$-algebras over $H\FF_p$~:
\[\rm{THH}(H\FF_p)\simeq H\FF_p\otimes\Sigma^{\infty}_+\Omega S^3.\]
\end{theorem}

\begin{proof}
Since $S^3\cong SU(2)$ is a Lie group, the space $\Omega S^3$ is a 2-fold loop space. Therefore, both sides of this equation are $E_2$-algebras in $H\FF_p$-modules so they can be viewed as dg-$E_2$-algebras. They also have isomorphic homotopy rings given by a polynomial ring on one generator of (homological) degree $2$.
\end{proof}

\begin{remark}
In fact Bayındır and Moulinos prove that for any polynomial ring $R$ over $\FF_p$ on one even degree class, there is a unique $E_2$-algebra with $R$ as its homotopy ring (see \cite[Theorem 2.1]{bayindirmoulinos}). This result also follows from Theorem \ref{theorem : main}.  The proof given in \cite{bayindirmoulinos} is also obstruction theoretic and is based on the Postnikov tower of $E_2$-ring spectra. Their obstruction groups are André-Quillen cohomology groups over $E_2$ (instead of over $\W_1$ for us) and we believe that they are in general more difficult to compute.
\end{remark}

\begin{remark}
There is a long literature devoted to the ring spectrum $\rm{THH}(H\FF_p)$. B\"okstedt's calculation shows that the two ring spectra of Theorem \ref{theorem : bokstedt} have isomorphic homotopy rings (see \cite{bokstedttopological}). This implies that the underlying $H\FF_p$-modules are weakly equivalent. A different approach to this result using a description of $H\FF_p$ as a Thom spectrum is given in \cite{blumbergtopological}. It was observed in \cite[Theorem 1.1]{krausenikolaus} that B\"okstedt's calculation can be refined to prove an equivalence of $E_1$-algebras over $H\FF_p$~:
\[\rm{THH}(H\FF_p)\simeq H\FF_p\otimes\Sigma^{\infty}_+\Omega S^3.\]
\end{remark}

For the sake of completeness, we state and sketch the proof of the characteristic zero analogue of the above theorem.

\begin{theorem}\label{theorem : main char zero}

Let $A$ be a dg-$E_2$-algebra over $\QQ$ such that $H^*(A)=\rm{Sym}(V)$ with $V$ a finite dimensional graded vector space concentrated in even degrees. Let $B$ be a dg-$E_2$-algebra also concentrated in even degrees and degreewise finite dimensional. Then
\begin{enumerate}
\item for any map of $\QQ$-algebras
\[f\colon H^*(A)\to H^*(B)\]
there is a map in the homotopy category of $E_2$-algebras
\[\tilde{f}\colon A\to B\]
such that $H^*(\tilde{f})=f$.
\item Any $E_2$-algebra whose cohomology ring is isomorphic to the cohomology ring of $A$ must be quasi-isomorphic to $A$.
\end{enumerate}
\end{theorem}

\begin{proof}
This is completely analogous to the proof of Theorem \ref{theorem : main} except that the monad $\W_1$ has to be replaced by the Gerstenhaber monad. We obtain exactly as above an isomorphism
\[\AQ_{\rm{Ger}}(H^*(A),H^*(B))\cong\AQ_{\rm{Ass}}(\Lambda(s^{-1}V),s^{-1}H^*(B)).\]
Moreover, the right hand side is even thanks to Lemma \ref{lemm : Hochschild} (which does not depend on the characteristic of the field).
\end{proof}

\begin{remark}
The analogous theorem with $E_2$ replaced by $E_\infty$ is also true and well-known. Indeed, in that case, $A$ can be strictified to a commutative algebra. Then we can produce a quasi-isomorphism 
\[H^*(A)\to A\]
by sending each generator to a choice of cocycle representing it.
\end{remark}

\section{Diagrams of $E_2$-algebras}

\subsection{Main theorem}

Let $\cat{C}$ be a quasi-algebraic category and let $I$ be a small category. In this situation $\cat{C}^I$ is also a quasi-algebraic category. Given $c\colon I\to\cat{C}$ and $m\in\cat{Ab}(\cat{C}^I/c)$, we denote by $\AQ_{\cat{C},I}(c,m)$ the corresponding Quillen cohomology.

We start with the following proposition for which we could not find a reference.
 
\begin{proposition}
Let $I$ be a small category, let $A$ be an associative algebra in $\cat{grVect}^I$ and let $M$ be an $A$-$A$-bimodule in $\cat{grVect}^I$. Then 
\[\AQ^*_{\rm{Ass},I}(A,M)\cong \rm{Ext}^*_{A\otimes A^\op}(\Omega_A,M)\] 
where $\Omega_A$ is the bimodule of associative differentials defined by the following exact sequence
\[0\to \Omega_A\to A\otimes A\xrightarrow{m}A\]
where $m$ denotes the multiplication map.  
\end{proposition}

\begin{proof}
This proposition is very classical if $I=[0]$. The object $\Omega_A$ is the result of applying the abelianization functor to $\id_A\colon A\to A$ viewed as an object of $\cat{AssAlg}^I/A$. By definition of Quillen cohomology, this proposition boils down to proving an equivalence
\[\mathbb{L}\mathrm{Ab}(\id_A)\xrightarrow{\simeq} \Omega_A=\mathrm{Ab}(\id_A)\]
where $\rm{Ab}\colon s\cat{AssAlg}/A\to s\cat{Mod}_{A\otimes A^\op}$ is the abelianization functor. Moreover, it is enough to prove this equivalence pointwise.

For any $i$ in $I$ and any category $\cat{C}$, let us denote by $ev_i$ the evaluation functor $\cat{C}^I\to \cat{C}$. According to \cite[Theorem 4.7]{franklandbehavior}, there is a commutative diagram of Quillen left adjoints (the theorem applies since $ev_i$ preserves projective objects):
\[\xymatrix{
s(\cat{AssAlg}^I/A)\ar[d]_{ev_i}\ar[r]^{\rm{Ab}}&s\cat{Mod}_{A\otimes A^\op}\ar[d]^{ev_i}\\
s(\cat{AssAlg}/A(i))\ar[r]_{\rm{Ab}}&s\cat{Mod}_{A(i)\otimes A(i)^\op}
}
\]
Applying this to $\id_A$ and deriving the functors, we obtain a weak equivalence
\[(\mathbb{L}\Omega_A)(i)\simeq \mathbb{L}\Omega_{A(i)}\]
Now, using the case $I=[0]$ of the proposition, we find
\[(\mathbb{L}\Omega_A)(i)\simeq \Omega_{A(i)}=\Omega_A(i)\]
as desired.
\end{proof}

\begin{proposition}
Let $V\colon I\to\cat{grVect}$ be a diagram taking values in finite dimensional graded vector spaces concentrated in odd degrees. Let $M\colon I\to \cat{grVect}$ be a $\Lambda(V)$-module satisfying pointwise the conditions of Lemma \ref{lemm : Hochschild}. We moreover assume that $M$ viewed as a diagram of $k$-vector spaces is injective. Then $\AQ_{\rm{Ass},I}(\Lambda(V),M)$ is even.
\end{proposition}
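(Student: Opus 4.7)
The plan is to mimic the proof of Lemma~\ref{lemm : Hochschild} step by step in the diagram category, with the injectivity of $M$ playing a role only at the very end. As before, I would reduce the problem to showing that the diagrammatic Hochschild cohomology $\rm{HH}^{*,*}_{I}(\Lambda(V), M)$ is odd; the $s=0$ piece of $\AQ^*$, namely the natural derivations $\rm{Der}_{I}(\Lambda(V), M)$, is handled immediately as a diagram of pointwise derivations, each of which is concentrated in even internal degree by the parity argument already given.

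First I would transport the pullback trick to the diagram category: the algebra map $\Lambda(V)\to\Lambda(V)\otimes\Lambda(V)$, $v\mapsto v\otimes 1-1\otimes v$, is natural in $V$ and so lifts pointwise to diagrams. Pulling $M$ back along it produces a diagram $\tilde M$ of left $\Lambda(V)$-modules for which, as in the lemma,
\[\rm{HH}^{*}_{I}(\Lambda(V), M)\cong \rm{Ext}^{*}_{\Lambda(V)\text{-mod in }\cat{grVect}^{I}}(\ul k, \tilde M).\]
Since $V$ and $M$ are pointwise in odd degrees, the $V$-action on $\tilde M$ vanishes in every degree for parity reasons, so $\tilde M$ is the pullback $\psi^{*}M$ of the vector-space diagram $M$ along the augmentation $\psi:\Lambda(V)\to\ul k$.

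The next step is derived base change along $\psi$, giving
\[R\Hom_{\Lambda(V)\text{-mod in }\cat{grVect}^{I}}(\ul k, \psi^{*}M)\simeq R\Hom_{\cat{grVect}^{I}}\bigl(\ul k\otimes^{\L}_{\Lambda(V)}\ul k,\, M\bigr).\]
This is where the injectivity hypothesis becomes indispensable: $\Hom_{\cat{grVect}^{I}}(-, M)$ is then exact, which implies that for any chain complex $Y$ in $\cat{grVect}^{I}$ one has $H^{n}(\Hom_{\cat{grVect}^{I}}(Y, M))\cong \Hom_{\cat{grVect}^{I}}(H_{n}(Y), M)$. Applied to $Y=\ul k\otimes^{\L}_{\Lambda(V)}\ul k$, whose homology is the diagram $i\mapsto \rm{Tor}^{\Lambda(V(i))}_{*}(k,k)$, this identifies $\rm{HH}^{*}_{I}(\Lambda(V), M)$ with $\Hom_{\cat{grVect}^{I}}\bigl(\rm{Tor}^{\Lambda(V)}_{*}(\ul k,\ul k),\, M\bigr)$.

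Finally, the pointwise Koszul calculation in Lemma~\ref{lemm : Hochschild} shows that each $\rm{Tor}^{\Lambda(V(i))}_{*}(k,k)$ is bidegree-even, so the whole diagram $\rm{Tor}^{\Lambda(V)}_{*}(\ul k,\ul k)$ is even in $\cat{grVect}^{I}$. Pairing an even diagram with the odd $M$ via $\Hom_{\cat{grVect}^{I}}$ produces a bigraded group of odd total bidegree, which, after the shift $\AQ^{s,t}=\rm{HH}^{s+1,t}$ for $s>0$, yields the claimed evenness of $\AQ^{*,*}_{\rm{Ass},I}(\Lambda(V), M)$. The main obstacle I anticipate is justifying the derived base change in the enriched diagram setting and verifying that $\ul k\otimes^{\L}_{\Lambda(V)}\ul k$ is pointwise computed by the bar construction; once those foundations are laid, the parity bookkeeping is immediate.
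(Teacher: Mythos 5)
Your argument is essentially correct in its conclusion and its parity bookkeeping, but it takes a genuinely different route from the paper, and the route you chose is the one that concentrates all the difficulty in the foundations you defer to the last sentence. The paper never works with an ``enriched'' Hochschild cohomology of the diagram $\Lambda(V)$ at all: it writes the mapping space in $s\rm{Ass}\alg^I$ as a homotopy limit over $\Delta$ of products of \emph{pointwise} mapping spaces, so that the $E_1$-page of the resulting Bousfield--Kan spectral sequence consists of products of the single-algebra groups $\AQ^t(A(i_0),M(i_s))$ already computed in Lemma~\ref{lemm : Hochschild}. Each row of that $E_1$-page is then the cosimplicial abelian group computing $\rm{Ext}^*_{\cat{grVect}^I}(F,M)$ for an explicit diagram $F$ (the dual of $i\mapsto\rm{Ext}_{A(i)}(k,k)$), and injectivity of $M$ kills everything above cosimplicial degree $0$, giving the key inclusion $\AQ^{*,*}_I(A,M)\subset\prod_i\AQ^{*,*}(A(i),M(i))$, from which evenness is immediate. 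Your version replaces the cosimplicial decomposition by derived base change inside $\cat{grVect}^I$ and lands on $\Hom_{\cat{grVect}^I}(\rm{Tor}^{\Lambda(V)}(\ul k,\ul k),M)$, which is exactly the kernel the paper identifies as $E_2^{0,*}$; so the two answers agree, and your use of injectivity (exactness of $\Hom_{\cat{grVect}^I}(-,M)$) is the same mechanism that collapses the paper's spectral sequence.

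The gap you should not understate is the chain of identifications $\AQ^s_{\rm{Ass},I}\cong\rm{HH}^{s+1}_I\cong\rm{Ext}^*_{\Lambda(V)\textrm{-mod in }\cat{grVect}^I}(\ul k,\tilde M)$ in the diagram category. Two points deserve care. First, the comparison of Quillen cohomology with Hochschild cohomology uses the sequence $\Omega_A\to A\otimes A\to A$ of bimodules, and in $\cat{grVect}^I$ the free bimodule $A\otimes A$ is generated by the \emph{constant} diagram $\ul k$, which is not projective; so $\rm{Ext}^{>0}(A\otimes A,M)=\rm{Ext}^{>0}_{\cat{grVect}^I}(\ul k,M)$ does not vanish for free, and you need the injectivity of $M$ already at this stage --- contrary to your opening claim that it enters ``only at the very end.'' Second, the Cartan--Eilenberg reduction \cite[Theorem X.6.1]{cartaneilenberg} is stated for a single algebra; the map $v\mapsto v\otimes 1-1\otimes v$ is natural in $V$, so it does diagrammatize, but you must check that the resulting change-of-rings equivalence is compatible with the enriched $\rm{Ext}$ and not merely pointwise. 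Both points are fixable (injectivity is exactly what makes them work), but until they are written out your proof is a plan rather than a proof; the paper's cosimplicial decomposition is designed precisely to avoid having to set up this enriched homological algebra.
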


\begin{proof}
In order to prove this proposition, we shall first construct a spectral sequence computing the groups $\AQ_{\rm{Ass},I}(\Lambda(V),M)$.

For $A$ an associative algebra in $\cat{grVect}^I$ and $M$ an $A$-bimodule, using the previous proposition, we have an isomorphism
\[\AQ^*_{\rm{Ass},I}(A,M)=\pi_{-*}\RR\Hom_{A\otimes A^\op}(\Omega_A,M).\]

In order to compute this derived Hom, we can use first the bar resolution in the category of bimodules and obtain
\[\RR\Hom_{A\otimes A^\op}(\Omega_A,M)=\mathrm{holim}_{[n]\in \Delta}\RR\Hom_{\cat{grVect}^I}((A\otimes A^\op)^{\otimes n}\otimes\Omega_A,M)\]

Now, in general, if $\cat{C}$ is an $\infty$-category and $I$ a small category, we have
\[\rm{map}_{\cat{C}^I}(F,G)\simeq\rm{holim}_{\Delta}\left([s]\mapsto\prod_{i_0\to i_1\to\ldots\to i_s} \rm{map}(F(i_0),G(i_s))\right).\]
So we can write the derived Hom as the limit of the following double cosimplicial diagram
\[([s],[n])\mapsto \left(\prod_{i_0\to i_1\to\ldots\to i_s}\RR\Hom_{\cat{grVect}}((A\otimes A^\op)^{\otimes n}\otimes\Omega_A(i_0),M(i_s))\right)\]

Taking the limit in the $[n]$ direction, we get
\[\RR\Hom(\Omega_A^1,M)=\mathrm{holim}_{[s]\in \Delta}\left(\prod_{i_0\to i_1\to\ldots\to i_s}\RR\Hom_{A(i_0)\otimes A(i_0)^\op}(\Omega_{A(i_0)},M(i_s))\right)\]
As for any cosimplicial object, we obtain an associated Bousfield-Kan spectral sequence whose $E_1$ page is given by
\[E_1^{s,t}=\prod_{i_0\to i_1\to\ldots\to i_s}\AQ^{t}_{\rm{Ass}}(A(i_0),M(i_s)).\]
This spectral sequence converges as it can also be viewed as the spectral sequence of a third quadrant double complex (coming from the above bicosimplicial abelian group after taking the associated complex in both cosimplicial direction). From the comparison between Quillen cohomology and Hochschild cohomology, we see that the row $t=0$ is given by the cosimplicial abelian group
\[[s]\mapsto \prod_{i_0\to i_1\to\ldots\to i_s}\rm{Der}(A(i_0),M(i_s)),\]
while for $t>0$, we get
\[[s]\mapsto \prod_{i_0\to i_1\to\ldots\to i_s}\rm{HH}^{t+1}(A(i_0),M(i_s)).\]

So far we did not use anything about the specific situation and the above discussion would apply to any pair $(A,M)$. Using the computation of Lemma \ref{lemm : Hochschild}, we see that the row $t>0$ of the $E_1$-page is of the form
\[[s]\mapsto \prod_{i_0\to i_1\to\ldots\to i_s}\Hom_k(F(i_0),M(i_s)),\]
where $F\colon I\to\cat{grVect}_k$ is the degreewise dual of the functor
\[i\mapsto \rm{Ext}_{A(i)}(k,k).\]
Likewise the $0$th row is simply given by the cosimplicial abelian group
\[[s]\mapsto \prod_{i_0\to i_1\to\ldots\to i_s}\Hom_k(V(i_0),M(i_s)).\]
(indeed there is an isomorphism $\rm{Der}(\Lambda(V),M)\cong\Hom_k(V,M)$). From this observation, using injectivity of $M$, we deduce that the $E_2$-page of the spectral sequence is concentrated on the column $s=0$ and
\[E_2^{0,-t}=\rm{ker}\left(d_1\colon \prod_{i\in I}\AQ^{t}(A(i),M(i))\to \prod_{f\colon i\to j}\AQ^{t}(A(i),M(j))\right).\]
In particular, we see that 
\[\AQ^{*,*}_I(A,M)\subset\prod_{i\in I}\AQ^{*,*}(A(i),M(i))\]
and is therefore even by Lemma \ref{lemm : Hochschild}.
\end{proof}

\begin{corollary}
Let $V\colon I\to\cat{grVect}_{\FF_p}$ be a diagram of finite dimensional graded vector space concentrated in even degrees. Let $M\colon I\to\cat{grVect}_{\FF_p}$ be an injective diagram concentrated in even degrees and equipped with the structure of a module over $F_{\rlie_1}^{\W_1}(V)$. Then $\AQ_{\W_1,I}^{*,*}(F_{\rlie_1}^{\W_1}(V),M)$ is even.
\end{corollary}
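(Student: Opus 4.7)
The plan is to mimic, in the diagram setting, the proof of Lemma \ref{lemm : main computation} and reduce the computation to the diagram Hochschild computation of the previous proposition. The three steps are: (i) base change from $\W_1$-algebras to shifted restricted Lie algebras, (ii) shift to restricted Lie algebras, (iii) base change to associative algebras via the universal enveloping algebra. Each of these already appears in Lemma \ref{lemm : main computation}, and my goal is to verify that they all extend pointwise to the category of $I$-diagrams.

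First, I would observe that if $\cat{C}$ is quasi-algebraic then so is $\cat{C}^I$, and that any adjunction $F:\cat{C}\leftrightarrows \cat{D}:U$ between quasi-algebraic categories induces pointwise an adjunction between $\cat{C}^I$ and $\cat{D}^I$. The diagram analogue of Proposition \ref{prop : base change} holds for such an adjunction, since weak equivalences in $s(\cat{C}^I)$ are detected pointwise and the hypothesis $\L F(c)\to F(c)$ can be checked pointwise. Applying this to the adjunction $F_{\rlie_1}^{\W_1}:\rlie_1\alg^I\leftrightarrows \W_1\alg^I$, one reduces to showing $\L F_{\rlie_1}^{\W_1}(V(i))\simeq F_{\rlie_1}^{\W_1}(V(i))$ for each $i\in I$; since $V(i)$ is concentrated in even degrees, Remark \ref{rem : even degree} identifies this with $\sym(V(i))$, and Proposition \ref{prop : free is derived} does the rest. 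This yields
\[\AQ^*_{\W_1,I}(F_{\rlie_1}^{\W_1}(V),M)\cong \AQ^*_{\rlie_1,I}(V,M).\]

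Next, the equivalence of categories $\rlie_1\alg\simeq \rlie\alg$ given by $s^{-1}$ identifies $\AQ^*_{\rlie_1,I}(V,M)$ with $\AQ^*_{\rlie,I}(s^{-1}V,s^{-1}M)$; note that $s^{-1}V$ is pointwise a finite dimensional graded vector space concentrated in odd degrees with trivial bracket and restriction, and $s^{-1}M$ is pointwise concentrated in odd degrees and finite dimensional. Applying once more the diagram version of Proposition \ref{prop : base change}, now to the universal enveloping algebra adjunction (as in \cite[Theorem 14.2]{dokasquillencohomologydividedpower}, extended pointwise), one reaches
\[\AQ^*_{\rlie,I}(s^{-1}V,s^{-1}M)\cong \AQ^*_{\rm{Ass},I}(\Lambda(s^{-1}V),s^{-1}M),\]
where the base change hypothesis is again checked pointwise exactly as in Lemma \ref{lemm : main computation}. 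Finally, the hypotheses of the previous proposition are satisfied: $s^{-1}V$ is pointwise a finite dimensional graded vector space in odd degrees, $s^{-1}M$ is pointwise of finite dimension in odd degrees, and the shift functor is an equivalence of $\cat{grVect}_{\FF_p}^I$, so injectivity of $M$ implies injectivity of $s^{-1}M$. The previous proposition then gives evenness of the right hand side, proving the corollary.

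The main obstacle I expect is purely bookkeeping: one has to verify that everything required in the diagram analogue of Proposition \ref{prop : base change} genuinely carries over, namely that $(-)^I$ preserves the property of being quasi-algebraic, that pointwise adjunctions provide Quillen adjunctions on $s((-)^I)$, and that the derived-equals-underived condition can be checked pointwise. Once these formal points are in place, each of the three base change steps is an entirely pointwise argument, and the evenness conclusion follows immediately from the previous proposition.
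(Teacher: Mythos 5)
Your proposal is correct and follows essentially the same route as the paper: the paper's proof is a one-line reduction ``as in Lemma \ref{lemm : main computation}'' to the evenness of $\AQ_{\rm{Ass},I}(\Lambda(s^{-1}V),s^{-1}M)$ established in the preceding proposition, and your three base-change steps carried out pointwise are exactly the intended content of that reduction. You have merely made explicit the bookkeeping (quasi-algebraicity of $\cat{C}^I$, pointwise verification of the derived-equals-underived hypothesis, and preservation of injectivity under the shift) that the paper leaves implicit.
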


\begin{proof}
As in Lemma \ref{lemm : main computation}, we can reduce to showing that $\AQ_{Ass,I}(\Lambda(s^{-1}V),s^{-1}M)$ is even which is the content of the previous proposition.
\end{proof}

\begin{theorem}\label{theorem : main in family}
Assume that $i\mapsto A(i)$ is a diagram of differential graded $E_2$-algebras over $\FF_p$ such that
\begin{enumerate}
\item There is a diagram $V\colon I\to\cat{grVect}_{\FF_p}$ which is objectwise finite dimensional and concentrated in even degrees, and a natural isomorphism
\[H^*(A(i))\cong F_{\rlie_1}^{\W_1}(V(i)).\]
\item The diagram 
\[i\mapsto H^*(A(i))\]
is injective as an $I$-diagram in $\FF_p$-vector spaces.
\end{enumerate}
Then the diagram $A$ is formal as a diagram of $E_2$-algebras.
\end{theorem}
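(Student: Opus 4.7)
The plan is to repeat the obstruction-theoretic argument of Theorem \ref{theorem : main} almost verbatim, with the single change that Lemma \ref{lemm : main computation} gets replaced by the corollary just proved above. The goal is a weak equivalence of $I$-diagrams of $\E_2$-algebras between $A$ and the \emph{formal} diagram $H^*(A)$, where the latter is regarded as an $I$-diagram of $\E_2$-algebras via the map of operads $\E_2\to\mathcal{C}om$.

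First I would observe that, by Remark \ref{rem : even degree}, the $\W_1$-algebra structure on each $H^*(A(i))=F^{\W_1}_{\rlie_1}(V(i))$ is completely determined by its underlying graded commutative algebra structure, since $V(i)$ sits in even degrees. In particular, the identity $\id_{H^*(A)}$ is automatically a map of $I$-diagrams of $\W_1$-algebras, and this map is the initial datum I would feed into the obstruction spectral sequence.

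Next I would run the $I$-diagrammatic version of the Johnson--Noel spectral sequence computing $\rm{map}_{\E_2\alg^I}(H^*(A), A)$. Mirroring the proof of Theorem \ref{theorem : main}, the obstructions to extending $\id_{H^*(A)}$ through successive stages of the Postnikov tower of this mapping space live in $E_2^{t,t-1}$, which by the diagrammatic analogue of \cite[Theorem B]{johnsonnoel} is identified with $\AQ^{t,t-1}_{\W_1,I}(H^*(A), H^*(A))$. The total bidegree $s+t=2t-1$ is odd; hypothesis (2) of the theorem, asserting injectivity of the $I$-diagram $H^*(A)$ of $\FF_p$-vector spaces, is exactly what the preceding corollary requires, and its conclusion is that $\AQ^{*,*}_{\W_1,I}(H^*(A), H^*(A))$ is concentrated in even total bidegree. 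Hence every obstruction vanishes, the identity lifts to a class in $\pi_0\rm{map}_{\E_2\alg^I}(H^*(A), A)$, and the lift is automatically a levelwise cohomology isomorphism by convergence of the spectral sequence, giving the required formality.

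The step I expect to be the main obstacle is verifying that the Johnson--Noel obstruction spectral sequence really does globalize from $\E_2$-algebras to $I$-diagrams of $\E_2$-algebras, with its $E_2$-page identified as the $I$-diagrammatic $\W_1$-algebra Quillen cohomology used above. This should be formal: the categories $\W_1\Alg^I$ and $\E_2\alg^I$ are again quasi-algebraic, abelianization of $I$-diagrams is computed pointwise once combined with the cosimplicial bar resolution for $I$-indexed homotopy limits (as already exploited in the proof of the preceding proposition), and the Postnikov-tower construction of mapping spaces of $\E_2$-algebras is natural enough to globalize over $I$. But it is the only piece of bookkeeping not already in place in the preceding sections, and the one I would spell out carefully.
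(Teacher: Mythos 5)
Your proposal is correct and follows essentially the same route as the paper: run the Johnson--Noel obstruction spectral sequence for diagrams, identify the obstruction group with $\AQ^{t,t-1}_{\W_1,I}(F_{\rlie_1}^{\W_1}(V),H^*(A))$, and kill it using the preceding corollary. The one point you flag as a worry --- globalizing the spectral sequence over $I$ --- is handled in the paper simply by noting that \cite[Theorem B]{johnsonnoel} already applies with the ground category $\mathscr{D}$ taken to be $I$-diagrams of graded vector spaces, and that the injectivity hypothesis is used there (to verify condition (a) of that theorem) as well as in the corollary.
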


\begin{proof}
As in Theorem \ref{theorem : main}, we use obstruction theory. We check that the hypotheses of \cite[Theorem B]{johnsonnoel} hold. The category denoted $\mathscr{D}$ in \cite[Theorem B]{johnsonnoel} is simply the category of $I$-diagrams of graded vector spaces. Then condition (a) of \cite[Theorem B]{johnsonnoel} holds thanks to our injectivity assumption. The monad $T_{alg}$ in \cite[Theorem B]{johnsonnoel} is simply $F^{\W_1}$ applied objectwise. It follows that the relevant obstruction group is $\AQ_{\W_1,I}^{t,t-1}(F_{\rlie_1}^{\W_1}(V),H^*(A))$ which is zero thanks to the previous corollary.
\end{proof}

\subsection{Criterion for injectivity}

We borrow the following definition from Hovey (see \cite[Definition 5.1.1]{hoveymodelcategories})

\begin{definition}
A direct category is a category $I$ with a functor $\lambda\colon I\to(\mathbb{N},\leq)$ such that $\lambda(f)=\id$ if and only if $f=\id$.  
\end{definition}

\begin{proposition}
Let $I$ be a direct category. Consider a diagram $F\colon I^\op\to\cat{grVect}$. Then $F$ is injective if for all $i\in I$ the canonical map
\[F(i)\to\rm{lim}_{j\in I,\lambda(j)<\lambda(i)} F(j)\]
is surjective.
\end{proposition}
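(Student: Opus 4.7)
The plan is to verify directly the lifting property defining injectivity in the functor category $\cat{grVect}^{I^\op}$: for every monomorphism $G\hookrightarrow G'$ and every natural transformation $\alpha:G\to F$, I produce an extension $\alpha':G'\to F$. I build $\alpha'$ by induction on the level $n\geq 0$, constructing it first on the full subcategory $I_{<n}\subset I$ of objects with $\lambda<n$ and then extending object by object to level $n$. Since $I$ is direct, every non-identity morphism into an object of level $n$ comes from an object of strictly smaller level, so the only new datum at stage $n$ is the component $\alpha'_i:G'(i)\to F(i)$ for each $i$ with $\lambda(i)=n$.

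Compatibility of $\alpha'_i$ with the previously constructed components amounts to filling in the square
\[
\begin{tikzcd}
G(i) \ar[r, "\alpha_i"] \ar[d, hook] & F(i) \ar[d, "\pi_i"] \\
G'(i) \ar[r] & M_iF
\end{tikzcd}
\]
where $M_iF=\lim_{f:j\to i,\,j\neq i} F(j)$ is the matching object (equal, by the direct hypothesis on $I$, to the limit appearing in the statement), $\pi_i$ is the canonical matching map, and the bottom horizontal is the composite $G'(i)\to M_iG'\to M_iF$ induced by the already constructed $\alpha'$ at levels $<n$. Naturality of $\alpha$ on the subfunctor $G$ together with the induction hypothesis shows that the outer rectangle commutes.

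The lifting problem is now purely in $\cat{grVect}$. By hypothesis $\pi_i$ is a surjection of graded vector spaces, hence a split epimorphism; choose a splitting and let $\tilde\alpha_i:G'(i)\to F(i)$ denote the resulting preliminary lift of the bottom horizontal. The discrepancy $\alpha_i-\tilde\alpha_i|_{G(i)}$ factors through $\ker\pi_i$; since $G(i)\hookrightarrow G'(i)$ is a monomorphism of vector spaces and every graded vector space over a field is injective, this discrepancy extends to a map $G'(i)\to\ker\pi_i\subset F(i)$, and adding this correction to $\tilde\alpha_i$ yields $\alpha'_i$ satisfying both required compatibilities.

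The main obstacle is organizational: one must arrange the inductive bookkeeping so that the simultaneous constraints at stage $n$ (compatibility with $\alpha$ on $G(i)$ and with the previously built $\alpha'$ via the matching maps) reduce to a single lifting diagram in $\cat{grVect}$. Once this is set up, split surjectivity of $\pi_i$ together with the injectivity of graded vector spaces over a field resolve the lift, and the inductive construction assembles to the desired global extension $\alpha':G'\to F$.
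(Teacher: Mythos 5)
Your proof is correct and follows essentially the same route as the paper's: induction on the degree $\lambda$, reduction at each new object $i$ to a single lifting square against the matching map $F(i)\to\lim_{\lambda(j)<\lambda(i)}F(j)$, solved because the left map is injective and the right map is surjective in $\cat{grVect}$. Your explicit splitting-plus-correction argument just fills in the detail the paper leaves implicit, and you make the same observation that objects of equal degree admit no non-identity maps between them, so the stage-$n$ choices need no further compatibility.
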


\begin{proof}
This is very similar to \cite[Proposition 5.1.4]{hoveymodelcategories}. Let us call the number $\lambda(i)$ the ``degree'' of the object $i$. Given an objectwise injective map $f\colon M\to N$ in $\cat{grVect}^{I^\op}$, we can lift a map $g\colon M\to F$  to a map $\tilde{g}\colon N\to F$ inductively on degree. Assuming the lift has been produced up to degree $n$, the next step is to find a lift on an object $i$ of degree $n+1$. This amounts to finding a diagonal filler in the following diagram
\[\xymatrix{
M(i)\ar[d]_{f(i)}\ar[r]^{g(i)}&F(i)\ar[d]\\
N(i)\ar[r]&\rm{lim}_{\lambda(k)<\lambda(i)}F(k)
}
\]
in which the bottom horizontal map is the composite
\[N(i)\to \rm{lim}_{\lambda(k)<\lambda(i)}N(k)\xrightarrow{\tilde{g}} \rm{lim}_{\lambda(k)<\lambda(i)}F(k)\]
This lifting problem has a solution since the left vertical map is injective while the right vertical map is surjective. Once these liftings have been chosen for each $i$ of degree $n+1$, they are automatically compatible since there are no non-identity maps between objects of the same degree. This completes the induction step.
\end{proof}

\begin{example}\label{remark : injective 1}
Let $I$ be the category with two objects $0$ and $1$ and a single non-identity map $0\to 1$. We can consider $I$ as a direct category with $\lambda(0)=0$, $\lambda(1)=1$. The proposition says that an arrow $f\colon M_1\to M_0$ viewed as an $I^\op$-diagram in $\cat{grVect}$ is injective if $f$ is surjective. Similarly, a span diagram
\[M\to N\leftarrow P\]
is injective if each of the maps is surjective.
\end{example}

\begin{example}\label{remark : injective 2}
Let $I$ be the poset of faces of a simplicial complex. Then $F\colon I^\op\to\cat{grVect}$ is injective if it is fat, in the sense of \cite[Definition 3.6]{notbohmray}. Indeed, we can view $I$ as a direct category with $\lambda\colon I\to\mathbb{N}$ the dimension function.
\end{example}

\section{Applications}

\subsection{Formality of $BG$ for some compact Lie groups}

\begin{theorem}\label{theorem : formality compact Lie group}
Let $G$ be a compact Lie group with maximal torus $T$ and assume that $p$ does not divide the order of $N_G(T)/T$. Then $C^*(BG,\FF_p)$ is $E_2$-formal.
\end{theorem}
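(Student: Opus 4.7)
The plan is to reduce directly to the intrinsic formality result of Theorem~\ref{theorem : main}. Concretely, I need to check that the $\FF_p$-cohomology of $BG$ is a polynomial algebra on finitely many generators concentrated in even degrees; once this is verified, taking $A = C^*(BG,\FF_p)$ and $B = H^*(BG,\FF_p)$ (with zero differential), together with $f = \id$ on cohomology, part (1) of Theorem~\ref{theorem : main} produces a map $\tilde{f}\colon C^*(BG,\FF_p) \to H^*(BG,\FF_p)$ of $\E_2$-algebras in the homotopy category inducing the identity on cohomology, hence a quasi-isomorphism, which is precisely formality.

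The structure of $H^*(BG,\FF_p)$ follows from two classical ingredients. First, Borel's theorem: under the hypothesis $p \nmid |W|$ with $W = N_G(T)/T$, the transfer argument based on the Euler characteristic $\chi(G/T) = |W|$ (invertible in $\FF_p$) shows that the restriction map
\[
H^*(BG,\FF_p) \longrightarrow H^*(BT,\FF_p)^W
\]
is an isomorphism. Second, writing $H^*(BT,\FF_p) \cong \FF_p[t_1,\ldots,t_n]$ with $|t_i|=2$ where $n$ is the rank of $T$, the Weyl group $W$ acts $\FF_p$-linearly on the degree two part as a reflection group (coming from the integral reflection action on the character lattice of $T$ associated to the root system of $G$). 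Since $|W|$ is invertible in $\FF_p$, the Chevalley--Shephard--Todd theorem applies in this positive characteristic setting and yields an isomorphism
\[
H^*(BT,\FF_p)^W \cong \FF_p[f_1,\ldots,f_n]
\]
with the $f_i$ homogeneous of positive even degree. Setting $V$ to be the span of the $f_i$, we obtain $H^*(BG,\FF_p) \cong \sym(V)$ with $V$ finite-dimensional and concentrated in even degrees. Since the cohomology is then degreewise finite-dimensional, the hypotheses of Theorem~\ref{theorem : main} on $B$ are also met.

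There is essentially no substantive obstacle beyond correctly assembling these standard ingredients; all the actual content has been packaged into Theorem~\ref{theorem : main}. The only minor point to verify carefully is the applicability of Chevalley--Shephard--Todd in characteristic $p$, but the coprimality hypothesis $p \nmid |W|$ is exactly what makes the classical proof (averaging over $W$ to produce Reynolds operators and invariant generators) go through unchanged.
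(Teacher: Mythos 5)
There is a genuine gap: your reduction hinges on the claim that $H^*(BG,\FF_p)$ is a polynomial algebra on even generators, and this is false in the generality of the theorem. The Chevalley--Shephard--Todd step requires $W=N_G(T)/T$ to act on $H^2(BT,\FF_p)$ by pseudo-reflections. For \emph{connected} $G$ this holds ($W$ is the Weyl group of the root system), and in that case your argument is fine and in fact more direct than the paper's. But the theorem is stated for arbitrary compact Lie groups, and for non-connected $G$ the group $N_G(T)/T$ contains contributions from $\pi_0(G)$ that need not act by reflections. The paper itself records the counterexample $G=\ZZ/2\ltimes T^2$ with $\ZZ/2$ acting by inversion: here $-1$ on a rank-$2$ lattice is not a pseudo-reflection, $p=3$ is coprime to $|W|$, and $H^*(BG,\FF_3)\cong\FF_3[x^2,xy,y^2]\subset\FF_3[x,y]$ is visibly not polynomial. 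So Theorem \ref{theorem : main} cannot be applied to $C^*(BG,\FF_p)$ directly, and your proof breaks at its first step for such $G$.

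The paper's route avoids this entirely: by Feshbach's transfer argument one has $C^*(BG,\FF_p)\simeq C^*(BT,\FF_p)^{hW}$ as $\E_2$-algebras, and the problem becomes making the formality equivalence $C^*(BT,\FF_p)\simeq H^*(BT,\FF_p)$ \emph{$W$-equivariant}. This is exactly what the diagram version, Theorem \ref{theorem : main in family}, provides, with the injectivity hypothesis supplied by the fact that every $\FF_p[W]$-module is injective when $p\nmid|W|$. Formality of $BG$ then follows by passing to homotopy fixed points, with no polynomiality of $H^*(BG,\FF_p)$ needed. If you want to salvage your approach, you could state it as a separate, easier proof valid for connected $G$ (or more generally whenever $W$ happens to act by pseudo-reflections mod $p$), but the general case genuinely requires the equivariant/diagrammatic upgrade.
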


\begin{proof}
Let us write $W=N_G(T)/T$. In this situation, by \cite[Theorem 1.5]{feshbach} there is a quasi-isomorphism of $E_2$-algebra
\[C^*(BG;\FF_p)\to C^*(BT,\FF_p)^W\simeq C^*(BT,\FF_p)^{hW}.\]
Similarly, there is an isomorphism of commutative algebras
\[H^*(BG;\FF_p)\to H^*(BT,\FF_p)^W.\]
Since $H^*(BT;\FF_p)$ is polynomial on even degree generators, then the result will hold if the formality quasi-isomorphism
\[C^*(BT;\FF_p)\simeq H^*(BT;\FF_p)\]
given by Theorem \ref{theorem : main} can be made $W$-equivariant. Since, by assumption, $p$ does not divide the order of $W$, it follows that any $\FF_p[W]$-vector space is injective as a $W$-diagram and thus the result follows from Theorem \ref{theorem : main in family}.
\end{proof}

\begin{remark}
The fact that, under these assumptions $C^*(BG,\FF_p)$ is formal as an $E_1$-algebra is a theorem of Benson and Greenlees (see \cite{bensonformality}).
\end{remark}

\begin{remark}
It was observed by Benson and Greenlees in \cite{bensonformality} that a compact Lie group satisfying the assumptions of the above theorem does not necessarily have polynomial cohomology. They give the example of the non-connected group $G=\ZZ/2\ltimes T^2$ with $\ZZ/2$ acting on the torus by inversion. In that case $H^*(BG,\FF_3)$ is not a polynomial algebra. It is given by the subalgebra $\FF_3[x^2,xy,y^2]$ of $H^*(BT^2,\FF_3)\cong\FF_3[x,y]$. Nevertheless $BG$ is $E_2$-formal over $\FF_3$ thanks to our theorem.
\end{remark}

\subsection{Formality of Davis-Januszkiewicz spaces}

For our next application, recall that, given a relative CW-complex $A\subset X$ and a simplicial complex $K$ with set of vertices $V$, we may form the polyhedral product $Z(K;(A,X))\subset X^V$ given as 
\[Z(K;(A,X))=\bigcup_{\sigma\in K}X^{\sigma}\times A^{V-\sigma}.\]
A case of particular interest is the case $A=pt$ and $X=BS^1$. The resulting space is then called a Davis-Januszkiewicz space (this was originally introduced in \cite{davisjanu}, see also \cite[Chapter 4]{buchstaberpanov}).

\begin{theorem}\label{theorem : formality DJ space}
Let $G$ be a compact Lie group such that the $\FF_p$-cohomology of $BG$ is polynomial on even degree generators. Then $C^*(Z(K;(pt,BG)),\FF_p)$ is $E_2$-formal for any simplicial complex $K$. Moreover we have an isomorphism of algebras
\[H^*(Z(K;(pt,BG)),\FF_p)\cong\lim(\sigma\mapsto H^*(BG^\sigma,\FF_p)).\]
\end{theorem}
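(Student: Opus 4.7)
The plan is to realize $Z(K;(pt,BG))$ as a homotopy colimit over the face poset $K$ and apply Theorem \ref{theorem : main in family} to the corresponding cochain diagram indexed by $K^\op$. Concretely, I would invoke the standard homotopy decomposition
\[Z(K;(pt,BG)) \simeq \rm{hocolim}_{\sigma \in K} BG^\sigma,\]
in which the transition map $BG^\tau \to BG^\sigma$ (for $\tau \subset \sigma$) inserts basepoints in the coordinates of $\sigma \setminus \tau$. Passing to $\FF_p$-cochains yields an equivalence of $\E_2$-algebras
\[C^*(Z(K;(pt,BG)), \FF_p) \simeq \rm{holim}_{\sigma \in K^\op} C^*(BG^\sigma, \FF_p),\]
so it suffices to establish formality of this $K^\op$-diagram of $\E_2$-algebras and to check that its homotopy limit computes the ordinary limit of the formal model.

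Next I would verify the hypotheses of Theorem \ref{theorem : main in family}. By the Künneth formula and the polynomial hypothesis on $H^*(BG,\FF_p)$, we have $H^*(C^*(BG^\sigma,\FF_p)) \cong \sym(V^{\oplus|\sigma|})$, where $V$ is the finite-dimensional even-degree graded vector space of polynomial generators of $H^*(BG,\FF_p)$. Remark \ref{rem : even degree} identifies this with $F_{\rlie_1}^{\W_1}(V^{\oplus|\sigma|})$, which is hypothesis (1). For hypothesis (2) I would appeal to Example \ref{remark : injective 2}: the $K^\op$-diagram $\sigma \mapsto H^*(BG^\sigma)$ has restriction maps induced by the projections $V^{\oplus|\sigma|} \twoheadrightarrow V^{\oplus|\tau|}$, and a short inclusion-exclusion on polynomial rings shows that for every face $\sigma$ the canonical map $H^*(BG^\sigma) \to \lim_{\tau \subsetneq \sigma} H^*(BG^\tau)$ is surjective. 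Thus the diagram is \emph{fat} in the sense of Notbohm-Ray, hence injective.

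Theorem \ref{theorem : main in family} then supplies a formality equivalence of $K^\op$-diagrams of $\E_2$-algebras between $\sigma \mapsto C^*(BG^\sigma, \FF_p)$ and $\sigma \mapsto H^*(BG^\sigma, \FF_p)$. Taking homotopy limits of both sides and using that, for an injective diagram of $\E_2$-algebras with trivial differential, the homotopy limit agrees with the ordinary limit, we obtain
\[C^*(Z(K;(pt,BG)), \FF_p) \simeq \lim_{\sigma \in K^\op} H^*(BG^\sigma, \FF_p)\]
as $\E_2$-algebras. This is the asserted $\E_2$-formality, and passing to $H^*$ on both sides gives the stated algebra isomorphism.

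The step I expect to be most delicate is the last one: that injectivity (equivalently fatness) of the cohomology diagram forces its homotopy limit to coincide with the ordinary limit. This should follow from a direct-category Reedy-type argument analogous in spirit to the proof of the proposition preceding Example \ref{remark : injective 2}, but it requires slightly more infrastructure on $K^\op$-diagrams of $\E_2$-algebras than is made explicit in Section 5.2. By contrast, verifying the fatness condition itself reduces to routine inclusion-exclusion on polynomial rings restricted to subsets of variables.
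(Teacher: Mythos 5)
Your proposal follows essentially the same route as the paper: the polyhedral product is decomposed as a homotopy colimit over the face poset, Theorem \ref{theorem : main in family} is applied to the diagram $\sigma\mapsto C^*(BG^\sigma,\FF_p)$, and injectivity is obtained from the fatness criterion of Example \ref{remark : injective 2} (where the paper simply cites \cite[Lemma 3.8]{notbohmray}, you sketch the inclusion--exclusion verification directly, which is fine). Your explicit handling of the passage from the homotopy limit of the formal diagram to the ordinary limit is a point the paper leaves implicit, but it is correct and does follow from the injectivity hypothesis via the vanishing of higher derived limits.
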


\begin{proof}
This is an application of Theorem \ref{theorem : main in family}. In this case the diagram
\[\sigma\mapsto H^*(BG^\sigma,\FF_p)\]
is injective by \cite[Lemma 3.8]{notbohmray} and Example \ref{remark : injective 2}. It follows that the diagram $K^{\op}\to\alg_{E_2}$
\[\sigma\mapsto C^*(BG^\sigma,\FF_p)\]
is formal, therefore, we have a quasi-isomorphism of $E_2$-algebras
\[\rm{holim}(\sigma\mapsto H^*(BG^\sigma,\FF_p))\simeq \rm{holim}(\sigma\mapsto C^*(BG^\sigma,\FF_p))\simeq C^*(Z(K;(pt,BG));\FF_p). \]
\end{proof}

\begin{remark}
The case $G=S^1$ is a theorem of Matthias Franz (see \cite{franzhomotopy}). Franz phrases his result using the notion of ``homotopy Gerstenhaber algebras'' instead of $E_2$-algebras. The operad governing homotopy Gerstenhaber algebras is the complexity $2$ suboperad of the surjection operad (see \cite[Subsection 3.1]{franzhomotopygerst}) which is known to be a model for $E_2$ (see \cite[Section 4]{mccluresmith} or \cite[Subsection 1.6]{bergerfresse}) so the two results should be equivalent. Let us also mention that in that case, the cohomology of $Z(K;(pt,BS^1))$ can be computed and is given by the Stanley-Reisner algebra associated to the simplicial complex $K$. Another related result is the main theorem of \cite{notbohmrayII} that proves that the homotopy type of the Davis-Januszkiewicz space is determined by the integral cohomology ring if this ring is a complete intersection after extending the scalars to $\mathbb{Q}$.
\end{remark}

\begin{remark}
The polyhedral product construction can be extended to any map $A\to X$ (not just relative CW-complexes) by replacing the definition above by the homotopy colimit of the diagram 
\[\sigma\mapsto X^{\sigma}\times A^{V-\sigma}.\]
In particular, if $G$ is a compact Lie group, we may form $Z(K;(G,pt))$. This spaces inherits a $G^V$-action and we have
\[Z(K;(G,pt))_{hG^V}\simeq (Z(K;(pt,BG)))\]
by \cite[Lemma 2.3.2]{denhamsuciu}.
\end{remark}

\subsection{Multiplicative collapse of some Eilenberg-Moore spectral sequences}

Let $X\to B\leftarrow Y$ be a diagram of spaces. For each $b\in B$, we write $X_b:=\{b\}\times^h_BX$, the homotopy fiber of $X\to B$ over $b$. Recall that the cohomological Eilenberg-Moore spectral has as its $E_2$-page the group $\rm{Tor}^{H^*(B,\FF_p)}(H^*(X,\FF_p),H^*(Y,\FF_p))$. It is known to converge to $H^*(X\times_B^hY,\FF_p)$ under the following two assumptions.
\begin{enumerate}
\item For each $b\in B$ and each integer $j$, the vector space $H^j(X_b,\FF_p)$ is finite dimensional.
\item For each $b\in B$ and each integer $j$, the action of $\pi_1(B,b)$ on $H^j(X_b)$ is nilpotent (that is there is a finite filtration of $H^j(X_b)$ that is compatible with the action and such that the induced action on the associated graded pieces is trivial).
\end{enumerate}

\begin{theorem}\label{theorem : formality and EMSS}
Let $X\to B\leftarrow Y$ be a diagram of spaces. Assume that
\begin{enumerate}
\item The cohomology of all three spaces with $\FF_p$ coefficients is a polynomial algebra on finitely many even degree generators.
\item The maps $H^*(B,\FF_p)\to H^*(X,\FF_p)$ and $H^*(B,\FF_p)\to H^*(Y,\FF_p)$ are surjective and send generators to linear combinations of generators.
\end{enumerate}
Then, the Eilenberg-Moore spectral sequence collapses multiplicatively. In particular, if the spectral sequence converges (for example if the conditions recalled above are satisfied) there is an isomorphism of $\FF_p$-algebras
\[\rm{Tor}^{H^*(B,\FF_p)}(H^*(X,\FF_p),H^*(Y,\FF_p))\cong H^*(X\times^h_BY,\FF_p).\]
\end{theorem}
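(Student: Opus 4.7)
The plan is to show that the span of $\E_2$-algebras
\[C^*(X,\FF_p)\leftarrow C^*(B,\FF_p)\to C^*(Y,\FF_p)\]
is formal by invoking Theorem \ref{theorem : main in family}, and then to translate this formality into the multiplicative collapse of the EMSS by taking homotopy pushouts on both sides of the resulting zig-zag.

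First, I would verify the hypotheses of Theorem \ref{theorem : main in family} for this three-object $I$-diagram. Each polynomial cohomology ring coincides, by Remark \ref{rem : even degree}, with $F^{\W_1}_{\rlie_1}$ of its finite-dimensional even-degree generating space (with trivial Lie and restriction operations). The assumption that the two restriction maps send generators to linear combinations of generators promotes this to a natural isomorphism of $I$-diagrams $H^*(A)\cong F^{\W_1}_{\rlie_1}(V)$, where $V:I\to\cat{grVect}_{\FF_p}$ is the induced diagram of generating spaces. The second hypothesis, injectivity of the cohomology diagram in $\cat{grVect}_{\FF_p}^I$, I would deduce from the surjectivity of the two restriction maps via Example \ref{remark : injective 1}. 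Theorem \ref{theorem : main in family} then furnishes a zig-zag of weak equivalences of $\E_2$-algebra $I$-diagrams
\[\bigl(C^*(X)\leftarrow C^*(B)\to C^*(Y)\bigr)\simeq\bigl(H^*(X)\leftarrow H^*(B)\to H^*(Y)\bigr).\]

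Next, I would take homotopy pushouts (derived tensor products over the apex) in $\E_2$-algebras on both sides of this zig-zag. On the cochain side, the classical convergence assumption of the EMSS identifies the pushout $C^*(X)\otimes^L_{C^*(B)}C^*(Y)$ with $C^*(X\times^h_B Y,\FF_p)$ as an $\E_2$-algebra. On the cohomology side, because the apex $H^*(B,\FF_p)$ is a commutative polynomial ring in even degrees whose $\W_1$-operations are trivial (by Cohen's description and Remark \ref{rem : even degree}), the $\E_2$-derived tensor product is controlled purely by the commutative structure; the Koszul resolution of $H^*(X,\FF_p)$ over $H^*(B,\FF_p)$ then computes the cohomology of the pushout as the Tor algebra $\rm{Tor}^{H^*(B,\FF_p)}(H^*(X,\FF_p),H^*(Y,\FF_p))$ with its standard multiplicative structure. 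Passing to cohomology and composing the two identifications yields the claimed isomorphism of $\FF_p$-algebras.

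The main obstacle is this last step: carefully confirming that, in characteristic $p$, the cohomology of the $\E_2$-pushout of the commutative polynomial cohomology span agrees, as an algebra, with the classical Tor. The triviality of $\W_1$-operations on even-degree classes and the availability of an explicit Koszul resolution with trivial secondary structure make this possible, but one must still match the $\E_2$-multiplication on the derived tensor product with the shuffle product giving Tor its classical algebra structure. Everything else in the argument is a direct application of previously established results.
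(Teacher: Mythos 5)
Your argument is essentially the paper's proof: surjectivity gives injectivity of the cohomology span via Example \ref{remark : injective 1}, Theorem \ref{theorem : main in family} gives formality of the cochain span, and the derived tensor product over the apex together with Eilenberg--Moore convergence yields the multiplicative collapse (your observation that hypothesis (2) is what makes the identification $H^*(A)\cong F_{\rlie_1}^{\W_1}(V)$ natural in the diagram is a detail the paper elides, and is correct). One small caveat: the relative derived tensor product $C^*(X)\otimes^{\L}_{C^*(B)}C^*(Y)$ is not the homotopy pushout in $\E_2$-algebras --- the paper only claims it as a quasi-isomorphism of $\E_1$-algebras, which is all that is needed to read off the ring structure on Tor.
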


\begin{proof}
The surjectivity assumption ensures that the diagram
\[H^*(X,\FF_p)\leftarrow H^*(B,\FF_p)\to H^*(Y,\FF_p)\]
is injective by Example \ref{remark : injective 1}. From Theorem \ref{theorem : main in family}, we obtain that the diagram of $E_2$-algebras
\[C^*(X,\FF_p)\leftarrow C^*(B,\FF_p)\to C^*(Y,\FF_p)\]
is formal. It follows that there is a quasi-isomorphism of $E_1$-algebras
\[H^*(X,\FF_p)\otimes_{H^*(B,\FF_p)}^{\L}H^*(Y,\FF_p)\simeq C^*(X,\FF_p)\otimes_{C^*(B,\FF_p)}^{\L}C^*(Y,\FF_p).\]
Under the Eilenberg-Moore convergence assumption, the right-hand side is quasi-isomorphic to $C^*(X\times^h_BY,\FF_p)$.
\end{proof}

\begin{remark}
Arguably the most interesting case of application of the previous theorem is when $X$, $B$ and $Y$ are classifying space of compact Lie groups. In that case this Theorem is a weaker version of the main results of \cite{franzhomogeneous,carlsontwosided}. Indeed the main theorem in those papers does not have our second assumption, also these papers allow for the coefficient ring to be a PID instead of a field. On the other hand, those papers assume that $2$ is invertible in their coefficients ring and we are able to say something also in the case $p=2$.
\end{remark}

\begin{remark}
There is a long tradition of collapse results for the Eilenberg-Moore spectral sequence especially the one computing the cohomology of a homogeneous space $G/H$ with $G$ and $H$ Lie groups. Previously to the work of Franz and Carlson mentioned in the previous remark, we refer the reader to \cite{baumcohomology}, \cite{gugenheimmay} and \cite{wolfcohomology} for additive collapse results. Multiplicative collapse was proven over the reals by Cartan and over any field but under quite strong assumptions by Borel (see \cite[Théorème 6]{cartan} and \cite[Proposition 30.2]{borel}).  
\end{remark}

\begin{example}
Consider the complex Stiefel manifolds $V_{n,k}=U(n)/U(n-k)$. This may be viewed as the homotopy fiber of the map 
\[BU(n-k)\to BU(n)\]
induced by the standard inclusion. For any prime, the resulting Eilenberg-Moore spectral sequence
\[\mathrm{Tor}^{H^*(BU(n),\FF_p)}(H^*(BU(n-k),\FF_p),\FF_p)\Rightarrow H^*(V_{n,k},\FF_p)\]
satisfies the assumption of Theorem \ref{theorem : formality and EMSS} above and thus, we get a multiplicative computation of $H^*(V_{n,k},\FF_p)$ for any value of $p$. This result is classical and originally due to Borel (see \cite[Section 9]{borel}).
\end{example}

\begin{remark}
In contrast with the previous example, consider the diagonal inclusion $U(1)\to U(n)$. The quotient is $PU(n)$. By \cite[Section 8, Example 4]{baumcohomology}, the induced map in cohomology
\[H^*(BU(n),\FF_2)\to H^*(BU(1),\FF_2)\]
is surjective if and only if $n$ is odd. Unfortunately, our theorem does not imply collapse of the Eilenberg-Moore spectral sequence in those cases since the map does not send the generators to linear combination of generators. In any case, if $n\equiv 2\;\rm{mod}\;4$ this spectral sequence is known to have multiplicative extensions. In particular, if $n=2$, there is a homeomorphism $PU(2)\cong \mathbb{RP}^3$ and it is observed in \cite[Remark 12.9]{franzhomogeneous} that the algebra
$\rm{Tor}^{H^*(BU(2),\FF_2)}(H^*(BU(1),\FF_2),\FF_2)$ contains a non-zero element in degree $1$ that squares to zero.
\end{remark}

\begin{corollary}
Let $(X,x)$ be a based space with $H^*(X,\FF_p)\cong \sym(V)$ with $V$ finite dimensional an concentrated in even positive degrees. Then there is an isomorphism of Hopf algebras
\[H^*(\Omega X,\FF_p)\cong \Lambda(s^{-1}V)\]
\end{corollary}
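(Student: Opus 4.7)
The plan is to realize $\Omega X$ as the homotopy pullback $pt\times^h_X pt$ and apply Theorem \ref{theorem : formality and EMSS} to the span $pt\to X\leftarrow pt$. Its hypotheses are immediate: the cohomologies are polynomial on even-degree generators (trivially for $pt$), the augmentations $H^*(X,\FF_p)\to\FF_p$ are surjective, and each generator of $V$ is sent to $0$, which is vacuously a linear combination of the (empty) set of generators of $H^*(pt,\FF_p)$. Since $H^*(X,\FF_p)$ begins in positive even degrees, $X$ is simply connected and the classical Eilenberg-Moore convergence hypothesis holds. We thus obtain an isomorphism of $\FF_p$-algebras
\[H^*(\Omega X,\FF_p)\cong \rm{Tor}^{\sym(V)}(\FF_p,\FF_p),\]
and the Koszul resolution of $\FF_p$ over $\sym(V)$ (with $V$ in even degrees) identifies the right-hand side with $\Lambda(s^{-1}V)$ as a graded algebra.

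To promote this to an isomorphism of Hopf algebras, I would exhibit primitive algebra generators of the correct degrees. Each $v\in V$ of degree $n$ is classified by a map $\phi_v\colon X\to K(\FF_p,n)$, and looping gives an $H$-space map $\Omega\phi_v\colon\Omega X\to K(\FF_p,n-1)$. Set $x_v:=(\Omega\phi_v)^*\iota_{n-1}\in H^{n-1}(\Omega X,\FF_p)$, where $\iota_{n-1}$ is the fundamental class. Since $K(\FF_p,n-1)$ is $(n-2)$-connected, its reduced cohomology vanishes in degrees $1,\ldots,n-2$, so the reduced coproduct of $\iota_{n-1}$ is forced to be zero; hence $\iota_{n-1}$ is primitive, and so is $x_v$ because $\Omega\phi_v$ is an $H$-map. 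The class $x_v$ is precisely the cohomology suspension of $v$, which under the edge homomorphism of the Eilenberg-Moore spectral sequence corresponds to the Koszul generator $s^{-1}v\in\rm{Tor}_1^{\sym(V)}(\FF_p,\FF_p)\subset\Lambda(s^{-1}V)$. Since the $x_v$ are primitive generators of $H^*(\Omega X,\FF_p)$ matching the algebra generators of $\Lambda(s^{-1}V)$, the Hopf algebra structure on $H^*(\Omega X,\FF_p)$ is forced to be the exterior Hopf algebra on primitive odd-degree generators, giving the claimed isomorphism.

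The main obstacle is the final bookkeeping step: verifying that $x_v$ corresponds to $s^{-1}v$ under our algebra isomorphism, i.e., that the edge map of the EMSS in this polynomial setting coincides with cohomology suspension and matches pulled-back fundamental classes with the Koszul generators provided by Theorem \ref{theorem : formality and EMSS}. This is a classical property of the Eilenberg-Moore spectral sequence against the explicit Koszul resolution of $\FF_p$ over $\sym(V)$, and once it is in hand the primitivity of the $x_v$ upgrades the algebra isomorphism to one of Hopf algebras without further work.
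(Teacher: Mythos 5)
Your first step coincides with the paper's: apply Theorem \ref{theorem : formality and EMSS} to the span $pt\to X\leftarrow pt$ to obtain the algebra isomorphism $H^*(\Omega X,\FF_p)\cong\rm{Tor}^{\sym(V)}(\FF_p,\FF_p)\cong\Lambda(s^{-1}V)$. Where you genuinely diverge is the upgrade to Hopf algebras. The paper reads the coalgebra structure off the same formality equivalence: $\E_1$-formality of the augmented algebra $C^*(X,\FF_p)$ gives an isomorphism of coalgebras $H^*(\Omega X,\FF_p)\cong H^*(\rm{Bar}(H^*(X,\FF_p)))$, and the bar construction of $\sym(V)$ is the primitively generated exterior Hopf algebra; the algebra and coalgebra identifications are compatible because both come from one and the same quasi-isomorphism. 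You instead exhibit explicit primitive generators $x_v=(\Omega\phi_v)^*\iota_{n-1}$ (the cohomology suspensions of the $v$'s) and argue that a Hopf algebra generated by odd-degree primitives with vanishing squares must be the exterior Hopf algebra on primitives. This route is sound: primitivity of $\iota_{n-1}$ and of its pullback along an H-map is correct; the squares $x_v^2$ vanish even at $p=2$ because the underlying algebra is already known to be exterior, so the Frobenius kills everything in positive degree; and the fact that the $x_v$ generate follows from the classical identification of the cohomology suspension with the EMSS edge homomorphism onto $E_\infty^{-1,*}$ together with the standard fact that lifts of generators of the associated graded generate the filtered algebra. You trade the paper's (implicit) compatibility of the bar-construction equivalence with coproducts for this classical EMSS input, which you correctly isolate as the one point needing a reference; the payoff is a more hands-on argument that never touches the coalgebra structure on $\rm{Bar}$. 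One small correction: $H^1(X,\FF_p)=0$ does not imply that $X$ is simply connected (it only says $\pi_1(X)$ admits no nontrivial homomorphism to $\FF_p$), so Eilenberg--Moore convergence is a genuine additional hypothesis rather than a consequence of the evenness of $V$ --- but Theorem \ref{theorem : formality and EMSS} is itself stated ``under the classical convergence assumption,'' so on this point you are no worse off than the source.
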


\begin{proof}
The previous theorem gives us an isomorphism of algebras
\[H^*(\Omega X,\FF_p)\cong \rm{Tor}^{\sym(V)}(\FF_p,\FF_p)=\Lambda(s^{-1}V)\]
On the other hand, by Theorem \ref{theorem : main in family}, the diagram
\[C^*(pt,\FF_p)\leftarrow C^*(X,\FF_p)\rightarrow C^*(pt,\FF_p) \]
is $E_2$ (and hence $E_1$) formal. Equivalently, the augmented algebra $C^*(X,\FF_p)$ is $E_1$-formal. This implies that there is an isomorphism of coalgebras
\[H^*(\Omega X,\FF_p)\cong H^*(\rm{Bar}(H^*(X,\FF_p))\]
where $\rm{Bar}$ denotes the bar construction of an augmented algebra~:
\[\rm{Bar}(A):=\FF_p\otimes^{\L}_A\FF_p.\]
\end{proof}

\begin{remark}
As we mentioned in the proof of the corollary, the statement about the coalgebra structure only requires $E_1$-formality. This holds if the cohomology is polynomial without the evenness assumption (see \cite{halperinrite}). On the other hand, the statement about the algebra structure is not true if we only assume $E_1$-formality. As an example of this phenomenon consider $X=K(\ZZ/2,2)$. Then we have
\[H^*(X,\FF_2)=\FF_2[x_{2^n+1},n\geq 0]\]
with $x_2$ denoting the fundamental class and 
\[x_{2^n+1}=\rm{Sq}^{2^{n-1}}\ldots\rm{Sq}^1 x_2.\]
Then $C^*(X,\FF_2)$ is $E_1$-formal since its cohomology is polynomial. 

On the other hand, $H^*(\Omega X,\FF_2)=H^*(\mathbb{RP}^\infty,\FF_2)=\FF_2[y]$ with $|y|=1$ and with the Hopf algebra structure given by
\[\Delta(y)=y\otimes 1+1\otimes y.\]
We observe that, as a coalgebras, there is indeed an isomorphism
\[H^*(\Omega X,\FF_2)\cong\bigotimes_{n\geq 0}\Lambda[y_{2^n}]=H^*(\rm{Bar}(H^*(X,\FF_2)))\]
but this isomorphism is not compatible with the algebra structure. 

In fact, it can be shown that $C^*(X,\FF_2)$ is not $E_2$-formal. Indeed, for any space $Y$, the operation $\xi$ of the $\W_1$-structure on $H^*(Y,\FF_2)$ is simply given by $x\mapsto \rm{Sq}_1(x)=\rm{Sq}^{|x|-1}(x)$. It follows that an $E_2$-formal space must have a trivial operation $\rm{Sq}_1$. This is not the case for $X$. 
\end{remark}

\bibliographystyle{amsalpha}
\bibliography{biblio}

\end{document}